\theoremstyle{plain}
\newtheorem{teorema}{Theorem}
\newtheorem*{corolario}{Corollary}
\newtheorem{lema}{Lemma}
\theoremstyle{definition}
\newtheorem*{remark}{Remark}
\newcommand{\C}{\mathbb{C}}
\newcommand{\R}{\mathbb{R}}
\newcommand{\Z}{\mathbb{Z}}
\newcommand{\N}{\mathbb{N}}
\newcommand{\Real}{\operatorname{Re}}
\newcommand{\Imag}{\operatorname{Im}}
\newcommand{\Res}{\operatorname{Res}}
\journal{J.\ Math.\ Math.\ Anal.\ Appl.}
\def\ps@pprintTitle{%
     \let\@oddhead\@empty
     \let\@evenhead\@empty
     \def\@oddfoot{\footnotesize
     This paper has been published in
     \textit{J.\ Math.\ Anal.\ Appl.} \textbf{372} (2010), 470--485;
     doi:10.1016/j.jmaa.2010.07.029%
     \hfill}%
     \let\@evenfoot\@oddfoot}
\begin{document}

\title{Mean convergence of Fourier-Dunkl series}

\author[rioja]{\'Oscar~Ciaurri\fnref{asesora}}
\address[rioja]{CIME and Departamento de Matem\'aticas y Computaci\'on,
Universidad de La Rioja, 
26004 Logro\~no, Spain}
\ead{oscar.ciaurri@unirioja.es}
\fntext[asesora]{Supported by grant MTM2009-12740-C03-03, Ministerio de Ciencia e Innovaci\'on, Spain}

\author[mario]{Mario~P\'{e}rez\corref{cor1}\fnref{asesora,dga}}
\address[mario]{IUMA and Departamento de Matem\'aticas,
Universidad de Za\-ra\-go\-za,
50009 Za\-ra\-go\-za, Spain}
\ead{mperez@unizar.es}
\cortext[cor1]{Corresponding author}
\fntext[dga]{Supported by grant E-64, Gobierno de Arag\'on, Spain}

\author[reyes]{Juan~Manuel~Reyes}
\address[reyes]{Departament de Tecnologia, 
Universitat Pompeu Fabra,
08003 Bar\-ce\-lo\-na, Spain}
\ead{reyes.juanmanuel@gmail.com}

\author[rioja]{Juan~Luis~Varona\fnref{asesora}}
\ead{jvarona@unirioja.es}

\begin{abstract}
In the context of the Dunkl transform a complete orthogonal system arises in a
very natural way. This paper studies the weighted norm convergence of the 
Fourier series expansion associated to this system. We establish conditions on 
the weights, in terms of the $A_p$ classes of Muckenhoupt, which ensure the 
convergence. Necessary conditions are also proved, which for a wide class of 
weights coincide with the sufficient conditions.
\end{abstract}

\begin{keyword}
Dunkl transform \sep Fourier-Dunkl series \sep orthogonal system \sep mean 
convergence
\MSC[2000]{Primary 42C10; Secondary 33C10}
\end{keyword}

\maketitle

\section{Introduction}

For $\alpha>-1$, let $J_{\alpha}$ denote the Bessel function of order
$\alpha$:
\[
   J_{\alpha}(x)
   = \left( \frac{x}{2} \right)^{\alpha}
   \sum_{n=0}^{\infty} \frac{(-1)^n (x/2)^{2n}}{n! \, \Gamma(\alpha+n+1)}
\]
(a classical reference on Bessel functions is~\cite{Wat}). 
Throughout this paper, by $\frac{J_{\alpha}(z)}{z^\alpha}$ we denote the even 
function
\begin{equation}
\label{serie}
   \frac{1}{2^\alpha}
   \sum_{n=0}^{\infty} \frac{(-1)^n (z/2)^{2n}}{n! \, \Gamma(\alpha+n+1)},
   \quad z \in \C.
\end{equation}
In this way, for complex values of~$z$, let
\[
   \mathcal{I}_{\alpha}(z)
   = 2^{\alpha} \Gamma(\alpha+1) \frac{J_{\alpha}(iz)}{(iz)^{\alpha}}
   = \Gamma(\alpha+1) \sum_{n=0}^{\infty}
   \frac{(z/2)^{2n}}{n!\,\Gamma(n+\alpha+1)};
\]
the function $\mathcal{I}_{\alpha}$ is a small variation of the so-called 
modified Bessel function of the first kind and order $\alpha$, usually denoted 
by~$I_{\alpha}$. Also, let us take
\[
   E_{\alpha}(z)
   =\mathcal{I}_{\alpha}(z) + \frac{z}{2(\alpha+1)}\, \mathcal{I}_{\alpha+1}(z),
   \quad z \in \C.
\]
These functions are related with the so-called Dunkl transform on the real line 
(see~\cite{Dunkl2} and~\cite{Jeu} for details), which is a generalization of the 
Fourier transform. In particular, $E_{-1/2}(x)=e^{x}$ and the Dunkl transform of 
order $\alpha=-1/2$ becomes the Fourier transform.
Very recently, many authors have been investigating the behaviour of the
Dunkl transform with respect to several problems already studied for the Fourier
transform; for instance, Paley-Wiener theorems~\cite{AndJeu}, 
multipliers~\cite{BeCiVa}, uncertainty~\cite{RoVo}, Cowling-Price's theorem~\cite{MeTri}, 
transplantation~\cite{NoSt}, Riesz transforms~\cite{NoSt2}, and so on.
The aim of this paper is to pose and analyse in this new context the weighted 
$L^p$ convergence of the associated Fourier series in the spirit of the 
classical scheme which, for the trigonometric Fourier series, can be seen in 
Hunt, Muckenhoupt and Wheeden's paper~\cite{HMW}.

The function $\mathcal{I}_{\alpha}$ is
even, and $E_{\alpha}(ix)$ can be expressed as
\[
   E_{\alpha}(ix)
   = 2^{\alpha} \Gamma(\alpha+1) \left( \frac{J_{\alpha}(x)}{x^{\alpha}}
   + \frac{J_{\alpha+1}(x)}{x^{\alpha+1}} xi \right).
\]
Let $\{s_j\}_{j\geq 1}$ be the increasing sequence of positive zeros of 
$J_{\alpha+1}$. The real-valued function $\Imag E_{\alpha}(ix) =
\frac{x}{2(\alpha+1)}\,\mathcal{I}_{\alpha+1}(ix)$ is odd and its zeros are
$\{s_j\}_{j\in \Z}$ where $s_{-j}=- s_j$ and $s_0=0$. 
In connection with the Dunkl transform on the real line, two of the authors introduced 
the functions~$e_j$, $j \in \Z$, as follows:
\begin{align*}
   e_0(x) &= 2^{(\alpha+1)/2} \Gamma(\alpha+2)^{1/2},
   \\
   e_j(x)
   &= \frac{2^{\alpha/2}\Gamma(\alpha+1)^{1/2}}{|\mathcal{I}_{\alpha}(i s_j)|}\,
   E_{\alpha}(i s_j x),
   \quad j \in \Z \setminus \{0\}.
\end{align*}
The case $\alpha=-1/2$ corresponds to the classical trigonometric Fourier setting: 
$\mathcal{I}_{-1/2}(z) = \cos(iz)$, $\mathcal{I}_{1/2}(z) = \frac{\sin(iz)}{iz}$, 
$s_j = \pi j$, $E_{-1/2}(i s_j x) = e^{i\pi jx}$, and $\{e_j\}_{j \in \Z}$ is 
the trigonometric system with the appropriate multiplicative constant so that 
it is orthonormal on $(-1,1)$ with respect to the normalized Lebesgue 
measure $(2\pi)^{-1/2}\,dx$.

For all values of $\alpha > -1$, in~\cite{CiVa} the sequence $\{e_j\}_{j\in\Z}$ 
was proved to be a complete orthonormal system in $L^2((-1, 1), d\mu_{\alpha})$, 
$d\mu_{\alpha}(x) = (2^{\alpha+1}\Gamma(\alpha+1))^{-1}|x|^{2\alpha+1}\,dx$.
That is to say
\[
   \int_{-1}^1 e_j(x)\overline{e_k(x)} \, d\mu_{\alpha}(x) = \delta_{jk} 
\]
and for each $f\in L^2((-1,1), d\mu_{\alpha})$ the series
\[
   \sum_{j=-\infty}^{\infty} \left(\int_{-1}^1 f(y) \overline{e_j(y)} \, d\mu_{\alpha}(y) \right)
   e_j(x),
\]
which we will refer to as Fourier-Dunkl series,
converges to $f$ in the norm of $L^2((-1,1), d\mu_{\alpha})$.
The next step is to ask for which $p\in (1,\infty)$, $p\neq 2$,
the convergence holds in $L^p((-1,1), d\mu_{\alpha})$. The problem is equivalent, by the
Banach-Steinhauss theorem, to the uniform boundedness on
$L^p((-1,1), d\mu_{\alpha})$ of the partial sum operators $S_n f$ given by
\[
   S_n f (x)=\int_{-1}^1 f(y ) K_n(x,y) \, d\mu_{\alpha}(y),
\]
where $K_n(x,y)=\displaystyle\sum_{j=-n}^n e_j(x)\overline{e_j(y)}$. 
We are interested in weighted norm estimates of the form
\[
   \| S_n(f) U \|_{L^p((-1,1), d\mu_{\alpha})}
   \leq C \| f V\|_{L^p((-1,1), d\mu_{\alpha})},
\]
where $C$ is a constant independent of $n$ and $f$, and $U$, $V$ are nonnegative 
functions on $(-1,1)$.

Before stating our results, let us fix some notation. The conjugate 
exponent of $p\in (1,\infty)$ is denoted by~$p'$.
That is,
\[
   \frac{1}{p} + \frac{1}{p'} = 1,
   \quad \text{or}
   \quad p' =\frac{p}{p-1}.
\]
For an interval $(a,b) \subseteq \R$, the Muckenhoupt class $A_p(a,b)$ consists 
of those pairs of nonnegative functions $(u,v)$ on $(a,b)$ such that
\[
   \left( \frac{1}{|I|} \int_{I}u(x) \, dx \right)
   \left(\frac{1}{|I|}\int_{I}v(x)^{-\frac{1}{p-1}} \, dx \right)^{p-1}\leq C ,
\]
for every interval $I\subseteq (a,b)$, with some constant $C > 0$ independent of~$I$. 
The smallest constant satisfying this property
is called the $A_p$ constant of the pair $(u,v)$.

We say that $(u,v)\in A_p^{\delta}(a,b)$ (where $\delta>1$) if $(u^{\delta} ,
v^{\delta})\in A_p(a,b)$. It follows from H\"{o}lder's inequality that
$A_p^{\delta}(a,b)\subseteq A_p(a,b)$.

If $u \equiv 0$ or $v \equiv \infty$, it is trivial that $(u,v)\in A_p(a,b)$ 
for any interval $(a,b)$.
Otherwise, for a bounded interval $(a,b)$, if $(u,v)\in A_p(a,b)$ then the 
functions~$u$ and $v^{-\frac{1}{p-1}}$ are integrable on $(a,b)$.

Throughout this paper, $C$ denotes a positive constant which may be different in each occurrence.

\section{Main results}

We state here some $A_p$ conditions which ensure the weighted $L^p$ boundedness
of these Fourier-Dunkl orthogonal expansions. For simplicity, we separate the
general result corresponding to arbitrary weights in two theorems, the first one
for $\alpha \geq -1/2$ and the second one for $-1<\alpha <-1/2$.

\begin{teorema}\label{tma1}
Let $\alpha \geq -1/2$ and $1<p<\infty$. Let $U$, $V$ be weights on
$(-1,1)$. Assume that
\begin{equation}
\label{Ap_tma1}
   \left(
   U(x)^p |x|^{(\alpha+\frac{1}{2})(2-p)},
   V(x)^p |x|^{(\alpha+\frac{1}{2})(2-p)}
   \right)
   \in A_p^{\delta}(-1,1)
\end{equation}
for some $\delta>1$ (or $\delta=1$ if $U=V$). Then there exists a constant $C$ 
independent of $n$ and~$f$ such that
\[
   \|S_n(f) U\|_{L^p((-1,1),d\mu_{\alpha})}
   \leq C \|f V\|_{L^p((-1,1), d\mu_{\alpha})}.
\]
\end{teorema}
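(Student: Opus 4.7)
The plan is to reduce $S_n$ to two Fourier-Bessel partial sum operators by a parity decomposition. Writing
\[
   E_{\alpha}(is_jx)=\mathcal{I}_{\alpha}(is_jx)+i\,\tfrac{s_jx}{2(\alpha+1)}\mathcal{I}_{\alpha+1}(is_jx),
\]
the first summand is real and even in $x$, the second real and odd. Since $s_{-j}=-s_j$ and both $\mathcal{I}_\alpha$ and $\mathcal{I}_{\alpha+1}$ are even, one checks $e_{-j}=\overline{e_j}$, and the reproducing kernel splits as $K_n=K_n^{(e)}+K_n^{(o)}$, where $K_n^{(e)}$ (the constant $|e_0|^2$ plus the sum of $\mathcal{I}_\alpha$--$\mathcal{I}_\alpha$ products) is even in each variable, while $K_n^{(o)}$ (the sum of $\mathcal{I}_{\alpha+1}$--$\mathcal{I}_{\alpha+1}$ products) is odd in each. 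Decomposing $f=f_e+f_o$ into even and odd parts, the cross integrals against $d\mu_{\alpha}$ vanish by symmetry, so $S_nf=S_n^{(e)}f_e+S_n^{(o)}f_o$ and it suffices to bound each piece separately.

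Restricting to $(0,1)$ and using $\mathcal{I}_\alpha(is_jx)=2^{\alpha}\Gamma(\alpha+1)J_\alpha(s_jx)/(s_jx)^{\alpha}$, the operator $S_n^{(e)}$ becomes a partial sum operator for the Fourier-Bessel type system of order $\alpha$ whose frequencies are the zeros of $J_{\alpha+1}$. Pulling the explicit factor $x$ out of $\frac{s_jx}{2(\alpha+1)}\mathcal{I}_{\alpha+1}(is_jx)$, the action of $S_n^{(o)}$ on $f_o$ reduces, up to a multiplicative factor $x$, to a standard Fourier-Bessel partial sum of order $\alpha+1$ applied to $f_o(x)/x$. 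For $\alpha\geq -1/2$ (hence $\alpha+1\geq 1/2$) weighted $L^p$ bounds with $A_p^{\delta}$ hypotheses for both expansions are already available in the Fourier-Bessel literature.

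The last step is to translate hypothesis~\eqref{Ap_tma1} on $(-1,1)$ into the $A_p^{\delta}(0,1)$ conditions demanded by each of the two Fourier-Bessel theorems. The factor $|x|^{(\alpha+1/2)(2-p)}$ is precisely what is produced when one passes from the measure $|x|^{2\alpha+1}\,dx$ to Lebesgue measure after taking $p$th powers and then pushes a factor $|x|^{\alpha+1/2}$ through the operator so as to convert the Bessel kernel into a Hilbert-transform-like one. This translation immediately delivers the required $A_p^{\delta}(0,1)$ condition for the even piece; for the odd piece one additionally absorbs the factor $1/x$ coming from $f_o/x$ into the weight pair and checks that the modified pair still lies in the correct $A_p^{\delta}$ class.

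The main obstacle will be this final weight bookkeeping, especially for the odd part: after the $1/x$ adjustment and the $|x|^{\alpha+1/2}$ push-through, the effective power exponent shifts, and one must verify that the single hypothesis~\eqref{Ap_tma1} still forces the appropriate $A_p^{\delta}(0,1)$ membership for the order-$(\alpha+1)$ Fourier-Bessel operator. The hypothesis $\alpha\geq -1/2$ enters precisely here, because for $-1<\alpha<-1/2$ the admissible range of weights for the Bessel operators narrows and the analysis must be reworked, which is why the remaining case is separated into the next theorem.
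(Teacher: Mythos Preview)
There is a genuine gap in the parity reduction. After writing $S_nf=S_n^{(e)}f_e+S_n^{(o)}f_o$ and bounding each piece via Fourier--Bessel on $(0,1)$, you still need
\[
\|V f_e\|_{L^p((-1,1),d\mu_\alpha)}+\|V f_o\|_{L^p((-1,1),d\mu_\alpha)}\le C\,\|V f\|_{L^p((-1,1),d\mu_\alpha)},
\]
i.e.\ boundedness of the even/odd projections on $L^p(V^p\,d\mu_\alpha)$. This fails for asymmetric weights: from $|f_e(x)|\le\tfrac12(|f(x)|+|f(-x)|)$ one picks up a term $\int_{-1}^{1}|f(x)|^pV(-x)^p\,d\mu_\alpha(x)$, and the $A_p^\delta$ hypothesis on $(-1,1)$ does not force $V(-x)\le CV(x)$. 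A concrete obstruction already in the trigonometric case $\alpha=-\tfrac12$: take $U=V$ with $V\equiv1$ on $(0,1)$, $V(x)=(1+x)^{a}$ on $(-1,0)$, $-1/p<a<0$ (so $V^p\in A_p(-1,1)$ and \eqref{Ap_tma1} holds), and $f=\chi_{(1-\varepsilon,1)}$; then $\|Vf\|_p^p=\varepsilon$ while $\|Vf_e\|_p^p\ge c\,\varepsilon^{ap+1}$, and the ratio blows up as $\varepsilon\to0$. Thus the restriction to $(0,1)$ works only for essentially symmetric weights, whereas the theorem is for arbitrary $U,V$; the paper indeed remarks that the known Fourier--Bessel results do not yield the theorem. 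A secondary issue is that your even piece is a Dini-type expansion (functions $J_\alpha(s_jx)$ at the zeros $s_j$ of $J_{\alpha+1}$), not the standard Fourier--Bessel series, so a blanket appeal to ``the Fourier--Bessel literature'' is not enough.

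The paper avoids both problems by working directly on $(-1,1)$. A residue computation gives $K_n(x,y)=B(M_n,x,y)+B(M_n,y,x)+R_n(x,y)$, where each $B$ is a product of two Bessel factors divided by $x-y$, and $|R_n(x,y)|\le C\bigl(|xy|^{-\alpha-1/2}(2-x-y)^{-1}+1\bigr)$. For $\alpha\ge-\tfrac12$ one has $|z^{1/2}J_\alpha(z)|+|z^{1/2}J_{\alpha+1}(z)|\le C$ for all $z>0$, so each main term is (bounded function)$\times$(Hilbert transform on $(-1,1)$) applied to (bounded function)$\times f$; the single condition \eqref{Ap_tma1} then suffices, with no symmetry assumption on $U$ or $V$.
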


\begin{teorema}\label{tma2}
Let $-1<\alpha <-1/2$ and $1<p<\infty$. Let $U$, $V$ be weights on $(-1,1)$. 
Let us suppose that $U$, $V$ satisfy the conditions
\begin{align}
\label{Ap1_tma2}
   \left(
   U(x)^p |x|^{(2\alpha+1)(1-p)},
   V(x)^p |x|^{(2\alpha+1)(1-p)}
   \right)
   &\in A_p^{\delta}(-1,1),
   \\
\label{Ap2_tma2}
   \left(
   U(x)^p |x|^{2\alpha +1},
   V(x)^p |x|^{2\alpha +1}
   \right)
   &\in A_p^{\delta}(-1,1)
\end{align}
for some $\delta>1$ (or $\delta=1$ if $U=V$). Then there exists a constant $C$ 
independent of $n$ and~$f$ such that
\[
   \|S_n(f) U\|_{L^p((-1,1),d\mu_{\alpha})}
   \leq C \|f V\|_{L^p((-1,1), d\mu_{\alpha})}.
\]
\end{teorema}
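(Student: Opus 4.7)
The plan is to mimic the scheme used for Theorem~\ref{tma1} --- itself an adaptation of the argument of Hunt, Muckenhoupt and Wheeden~\cite{HMW} --- adding the extra terms peculiar to the range $-1<\alpha<-\tfrac12$. I would decompose the Fourier-Dunkl kernel
\[
  K_n(x,y)=\sum_{j=-n}^n e_j(x)\overline{e_j(y)}
\]
into a small number of \emph{leading Hilbert-transform-type operators} plus error kernels dominated by the Hardy kernel, and then invoke the one-weight / two-weight boundedness of the Hilbert transform with Muckenhoupt weights. The factor $\delta>1$ is needed to promote HMW's weak-type estimate to a strong-type one in the two-weight case; for $U=V$ the standard $A_p$ theorem applies directly, which is why one can take $\delta=1$.

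The first step is a Christoffel--Darboux-type identity, writing $K_n(x,y)$ as a finite combination of quotients of the form $(e_m(x)e_{m'}(y)-e_{m'}(x)e_m(y))/(x\pm y)$ with indices $m,m'\in\{\pm n,\pm(n+1)\}$; this follows from the three-term differential relations satisfied by $J_\alpha$ and $J_{\alpha+1}$. The next step is to substitute the large-argument asymptotic
\[
   \frac{J_\alpha(t)}{t^\alpha}
   =\sqrt{\tfrac{2}{\pi}}\,t^{-\alpha-\frac12}\cos\bigl(t-\tfrac{(2\alpha+1)\pi}{4}\bigr)+O\bigl(t^{-\alpha-\frac32}\bigr),
\]
its analogue for $J_{\alpha+1}$, and the asymptotic $s_j=(j+\tfrac{\alpha}{2}+\tfrac14)\pi+O(1/j)$ of the zeros, to obtain, away from the origin,
\[
   e_j(x)=C\,|x|^{-(\alpha+\frac12)}\exp\bigl(is_jx-i\phi_\alpha\sign(x)\bigr)+R_j(x),
\]
with $\phi_\alpha=(\alpha+\tfrac12)\pi/2$ and a remainder~$R_j$ that, once summed in~$j$ after the Christoffel--Darboux cancellations, is bounded by a Hardy-type kernel.

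For $\alpha\geq-\tfrac12$ (Theorem~\ref{tma1}) the various pieces produced by this substitution can all be absorbed into a single Hilbert-transform-type operator, and one $A_p^\delta$ condition suffices. For $-1<\alpha<-\tfrac12$, however, the more singular behaviour of the measure $d\mu_\alpha=c\,|x|^{2\alpha+1}\,dx$ at the origin, together with the sign change of the exponent $\alpha+\tfrac12$, keeps two essentially different Hilbert-type pieces alive. Each of them, after reinterpreting the $L^p(d\mu_\alpha)$-norm as a weighted $L^p(dx)$-norm by absorbing the power weight $|y|^{2\alpha+1}$ and the Bessel-asymptotic factor $|y|^{\alpha+1/2}$ into the function, reduces to a weighted inequality for the truncated Hilbert transform on $(-1,1)$; by HMW, one of these inequalities holds exactly under~\eqref{Ap2_tma2} and the other exactly under~\eqref{Ap1_tma2}.

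The main obstacle, beyond the careful bookkeeping of the power weights, is to show that the error kernel arising from the $R_j$'s is dominated --- uniformly in $n$ --- by the kernel $\min(|x|^{-1},|y|^{-1})$ (or a similar Hardy operator), whose weighted boundedness is automatic once either of the two $A_p^\delta$ conditions holds. Once this is verified, the theorem follows by adding the HMW estimate for each of the two principal Hilbert pieces and the Hardy estimate for the remainder.
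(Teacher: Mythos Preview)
Your overall strategy --- reduce to Hilbert-transform pieces plus a controllable remainder --- is the same as the paper's, but the plan has a real gap at the point where the range $-1<\alpha<-\tfrac12$ actually bites.

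The problem is your use of the large-argument asymptotic for $J_\alpha$. For $\alpha\ge -\tfrac12$ one has $|J_\alpha(z)|\le Cz^{-1/2}$ for \emph{all} $z>0$, so the asymptotic bound is global and the substitution you describe is harmless (this is exactly why Theorem~\ref{tma1} goes through with a single $A_p^\delta$ condition). For $-1<\alpha<-\tfrac12$, however, $J_\alpha(z)\sim z^\alpha$ near $0$ is \emph{larger} than $z^{-1/2}$, so on the set $\{|M_n x|\lesssim 1\}$ your ``main term'' $|x|^{-(\alpha+1/2)}e^{is_jx}$ does not capture the true size of $e_j(x)$, and the remainder $R_j(x)$ is as big as the main term there. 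After the Christoffel--Darboux step you still carry the $1/(x-y)$ denominator, so the cross terms $\text{main}_m(x)R_{m'}(y)/(x-y)$ are genuine singular integrals, not Hardy-type; they cannot be dominated by $\min(|x|^{-1},|y|^{-1})$ uniformly in~$n$.

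What the paper does instead is keep the Bessel factors intact and use the sharp two-sided estimates
\[
   M_n^{1/2}|J_\alpha(M_n|x|)|\le C\,|x|^\alpha(|x|+M_n^{-1})^{-(\alpha+1/2)},
   \qquad
   M_n^{1/2}|J_{\alpha+1}(M_n|x|)|\le C\,|x|^{-(\alpha+1)}(|x|+M_n^{-1})^{\alpha+1/2}.
\]
This turns the two Hilbert pieces into weighted Hilbert transforms with \emph{$n$-dependent} weights containing the factor $(|x|+M_n^{-1})^{\pm p(\alpha+1/2)}$. The crux of the proof --- and what your plan is missing --- is a lemma showing that conditions~\eqref{Ap1_tma2} and~\eqref{Ap2_tma2} \emph{together} imply that these $n$-dependent pairs lie in $A_p^\delta(-1,1)$ with constant independent of~$n$. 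In particular, it is \emph{not} true that one Hilbert piece is controlled by~\eqref{Ap1_tma2} alone and the other by~\eqref{Ap2_tma2} alone; each piece needs both hypotheses, combined via an interpolation-of-weights argument (writing $(|x|+M_n^{-1})^{\pm p(\alpha+1/2)}$ as a mixture of $|x|^{\pm p(\alpha+1/2)}$ and $M_n^{\mp p(\alpha+1/2)}$, and using that the intermediate weight $|x|^{(2\alpha+1)(1-p/2)}$ also satisfies $A_p^\delta$ by Cauchy--Schwarz). A secondary point: the paper's remainder kernel is $\dfrac{|xy|^{-(\alpha+1/2)}}{2-x-y}+1$, singular only near $x=y=1$, not the origin-centred Hardy kernel you propose.
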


As we mentioned in the introduction, the case $\alpha = -1/2$ corresponds to 
the classical trigonometric case. Accordingly, \eqref{Ap_tma1} reduces then 
to $(U^p, V^p) \in A_p^\delta(-1,1)$. It should be noted also that taking real 
and imaginary parts in these Fourier-Dunkl series we would obtain the so-called 
Fourier-Bessel series on $(0,1)$ (see~\cite{Wng, BP1, BP2, GPRV2}), but the 
known results for Fourier-Bessel series do not give a proof of the above theorems. 
Also in connection with Fourier-Bessel series on $(0,1)$, Lemma~\ref{Apunif} below 
can be used to improve some results of~\cite{GPRV2}.

Theorems~\ref{tma1} and~\ref{tma2} establish some sufficient conditions for 
the $L^p$ boundedness. Our next result presents some necessary conditions. 
To avoid unnecessary subtleties, we exclude the trivial cases $U \equiv 0$ 
and $V \equiv \infty$.

\begin{teorema}
\label{tma3}
Let $-1 < \alpha$, $1 < p < \infty$, and $U$, $V$ weights on $(-1,1)$, 
neither $U \equiv 0$ nor $V \equiv \infty$. If there exists some constant 
$C$ such that, for every $n$ and every~$f$,
\[
   \|S_n(f) U\|_{L^p((-1,1),d\mu_{\alpha})}
   \leq C \|f V\|_{L^p((-1,1), d\mu_{\alpha})},
\]
then $U \leq C V$ almost everywhere on $(-1,1)$, and
\begin{align*}
   U(x)^p |x|^{(\alpha+\frac{1}{2})(2-p)} &\in L^1((-1,1),dx),
   \\
   \left(
   V(x)^p |x|^{(\alpha+\frac{1}{2})(2-p)} \right)^{-\frac{1}{p-1}}
   = V(x)^{-p'} |x|^{(\alpha+\frac{1}{2})(2-p')}
   &\in L^1((-1,1),dx),
   \\
   U(x)^p |x|^{2\alpha + 1} &\in L^1((-1,1),dx),
   \\
   \left(
   V(x)^p |x|^{(2\alpha + 1)(1-p)} \right)^{-\frac{1}{p-1}}
   = V(x)^{-p'} |x|^{2\alpha + 1}
   &\in L^1((-1,1),dx).
\end{align*}
\end{teorema}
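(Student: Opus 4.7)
My plan is to derive the five conclusions in four steps, exploiting the Hermitian symmetry $K_n(x,y)=\overline{K_n(y,x)}$, the completeness of $\{e_j\}_{j\in\Z}$ in $L^2((-1,1),d\mu_\alpha)$ from~\cite{CiVa}, and the asymptotic behaviour of the $e_j$'s as $|j|\to\infty$.

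To establish the pointwise comparison $U\le CV$, I use that for every $f\in L^2(d\mu_\alpha)$ the partial sums $S_n f$ converge to $f$ in $L^2(d\mu_\alpha)$, and hence a subsequence converges almost everywhere. Fatou's lemma combined with the uniform bound in the hypothesis yields
\[
\|fU\|_{L^p(d\mu_\alpha)}\le C\|fV\|_{L^p(d\mu_\alpha)}
\]
for every $f\in L^2(d\mu_\alpha)\cap L^p(V^p\,d\mu_\alpha)$. Testing with $f=\chi_E$ where $E$ is a set of positive $\mu_\alpha$-measure on which $V$ is bounded (which exists because $V\not\equiv\infty$), and letting $E$ range over a differentiation basis, forces $U\le CV$ a.e. The Hermitian symmetry of $K_n$ makes $S_n$ self-adjoint on $L^2(d\mu_\alpha)$, so the hypothesis is equivalent, with the same constant, to the dual inequality
\[
\|S_n(h)\,V^{-1}\|_{L^{p'}(d\mu_\alpha)}\le C\,\|h\,U^{-1}\|_{L^{p'}(d\mu_\alpha)},
\]
which I will use to trade conclusions between the $(U,V,p)$- and $(V^{-1},U^{-1},p')$-sides.

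The conditions $U^p|x|^{2\alpha+1}\in L^1(dx)$ and $V^{-p'}|x|^{2\alpha+1}\in L^1(dx)$ come from the case $n=0$: since $e_0$ is a nonzero constant, $S_0 f=|e_0|^2\int_{-1}^1 f\,d\mu_\alpha$ is constant in $x$, and applying the hypothesis to $f=\chi_E$ (for $E$ as above) gives $\|U\|_{L^p(d\mu_\alpha)}<\infty$. The companion bound for $V^{-1}$ follows from the same argument applied to the dual inequality, using $U\not\equiv 0$ to locate a set on which $U^{-1}$ is bounded. The remaining two conditions involve the factor $|x|^{(\alpha+1/2)(2-p)}$, which encodes the pointwise asymptotic
\[
|e_j(x)|^2\longrightarrow 2^\alpha\Gamma(\alpha+1)\,|x|^{-(2\alpha+1)}\quad(|j|\to\infty,\ x\in(-1,1)\setminus\{0\})
\]
derived from $J_\alpha(t)^2+J_{\alpha+1}(t)^2\sim 2/(\pi t)$ together with the normalization of $c_j$. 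Since $S_n e_j=e_j$ for $n\ge|j|$, the hypothesis reduces to $\|e_j U\|_{L^p(d\mu_\alpha)}\le C\|e_j V\|_{L^p(d\mu_\alpha)}$ uniformly in $j$; Fatou on the left-hand side would then yield, in the limit $|j|\to\infty$, the required $\int_{-1}^1 U^p|x|^{(\alpha+1/2)(2-p)}\,dx<\infty$, and the companion conclusion for $V^{-1}$ would arise from the dual inequality with $h=e_j$.

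The main obstacle is this last step in the regime $\alpha\ge -1/2$: one needs a uniform-in-$j$ bound on $\|e_j V\|_{L^p(d\mu_\alpha)}$, which the naive test $f=e_j$ does not provide. The uniform pointwise bound $|e_j(x)|\le C\min(s_j^{\alpha+1/2},|x|^{-(\alpha+1/2)})$ simplifies, for $\alpha\ge -1/2$, to $|e_j(x)|\le C|x|^{-(\alpha+1/2)}$, but this only reduces the task to controlling $\int_{-1}^1 V^p|x|^{(\alpha+1/2)(2-p)}\,dx$, which is not among our hypotheses. Closing this gap will require a more delicate choice of test function---for instance, $e_j$ truncated against the indicator of a set where $V$ is bounded, combined with the conclusions already obtained and the dual inequality of the second paragraph. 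For $-1<\alpha<-1/2$, on the other hand, the factor $|x|^{-(\alpha+1/2)}$ is bounded on $(-1,1)$, so the remaining two conditions are weaker than the Lebesgue ones and follow automatically.
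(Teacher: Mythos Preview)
Your treatment of the last two integrability conditions (via $S_0$ and duality) and of the comparison $U\le CV$ (via $L^2$-convergence, Fatou, and characteristic functions) is essentially the paper's argument, up to cosmetic differences in the choice of test function. Your observation that for $-1<\alpha<-1/2$ the first two integrability conditions follow automatically from the last two is also correct.

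The gap you yourself flag for $\alpha\ge -1/2$ is real, and the fix you suggest (truncating $e_j$ against a set where $V$ is bounded) does not work: once you multiply $e_j$ by $\chi_F$ it is no longer an eigenfunction of $S_n$, and you lose the identity $S_n f=f$ that your argument relies on. More generally, the test $f=e_j$ (or its dual $h=e_j$) only yields inequalities of the form $\|e_j U\|_{L^p(d\mu_\alpha)}\le C\|e_j V\|_{L^p(d\mu_\alpha)}$, in which both sides grow with~$j$; no finiteness conclusion can be extracted from that alone.

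The paper avoids this obstacle by looking instead at the rank-one operator $S_n-S_{n-1}$. Restricted to even (resp.\ odd) functions, it equals $f\mapsto 2\,\Real e_n\,\langle f,\Real e_n\rangle$ (resp.\ the same with $\Imag e_n$), so its operator norm from $L^p(V^p\,d\mu_\alpha)$ to $L^p(U^p\,d\mu_\alpha)$ is the \emph{product}
\[
   \|U\,\Real e_n\|_{L^p(d\mu_\alpha)}\;\|V^{-1}\,\Real e_n\|_{L^{p'}(d\mu_\alpha)}\le C,
\]
uniformly in~$n$. One then bounds \emph{each} factor from below in the limit: a Fej\'er-type lemma (Lemma~\ref{lemalimite}) shows that
\[
   \liminf_{n\to\infty}\|U\,\Real e_n\|_{L^p(d\mu_\alpha)}^p
   \ge c\int_{-1}^1 U(x)^p\,|x|^{(\alpha+\frac12)(2-p)}\,dx,
\]
and analogously for the $V^{-1}$ factor. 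Since $U\not\equiv 0$ and $V\not\equiv\infty$, neither lower bound vanishes; combined with the uniform bound on the product, this forces both integrals to be finite. In short, the missing idea is to extract a \emph{product} estimate from the boundedness of a rank-one projection, rather than a one-sided estimate from $S_n e_j=e_j$.
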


Notice that the first two integrability conditions imply the other two if 
$\alpha \geq -1/2$, while the last two imply the other if $-1 < \alpha < -1/2$.

When $U$, $V$ are power-like weights, it is easy to check that the conditions 
of Theorem~\ref{tma3} are equivalent to the $A_p$ conditions~\eqref{Ap_tma1}, 
\eqref{Ap1_tma2},~\eqref{Ap2_tma2}. By power-like weights we mean finite 
products of the form $|x-t|^\gamma$, for some constants $t$,~$\gamma$. 
For these weights, therefore, Theorems~\ref{tma1}, \ref{tma2} and~\ref{tma3} 
characterize the boundedness of the Fourier-Dunkl expansions. 
For instance, we have the following particular case:

\begin{corolario}
Let $b, A, B \in \R$, $1 < p < \infty$, and
\[
   U(x) = |x|^b (1-x)^A (1+x)^B.
\]
Then, there exists some constant $C$ such that 
\[
   \|U S_nf \|_{L^p((-1,1),d\mu_{\alpha})}
   \leq C \| U f \|_{L^p((-1,1), d\mu_{\alpha})}
\]
for every $f$ and $n$
if and only if $-1 < Ap < p-1$, $-1 < Bp < p-1$ and
\[
   -1 + p \Bigl(\alpha + \frac{1}{2}\Bigr)_+ < b p + 2 \alpha + 1
   < p - 1 + p (2\alpha + 1) - p \Bigl(\alpha + \frac{1}{2}\Bigr)_+,
\]
where $(\alpha+\frac{1}{2})_+ = \max\{\alpha+\frac{1}{2}, 0\}$.
\end{corolario}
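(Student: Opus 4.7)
The plan is to deduce the corollary directly from Theorems~\ref{tma1}, \ref{tma2}, and~\ref{tma3} applied with $V = U$. Since the two weights coincide, the $\delta = 1$ option is available, so the sufficient conditions reduce to plain Muckenhoupt conditions $w \in A_p(-1,1)$ on single power-product weights of the form $w(x) = |x|^{\gamma} (1-x)^{Ap}(1+x)^{Bp}$, with $\gamma$ depending on the theorem used. The whole proof is then a bookkeeping exercise on exponents.

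First I would treat sufficiency. Each hypothesis weight appearing in \eqref{Ap_tma1}, \eqref{Ap1_tma2} or \eqref{Ap2_tma2} has its singularities only at the three points $-1$, $0$, $1$, so by the classical characterization of power $A_p$ weights (cubing the interval into one near each singularity and one far from all singularities, where the weight is bounded above and below) the single-weight $A_p(-1,1)$ condition is equivalent to each exponent lying in $(-1,p-1)$ separately. The conditions $-1 < Ap < p-1$ and $-1 < Bp < p-1$ then come from the factors $(1-x)^{Ap}$ and $(1+x)^{Bp}$. For the central exponent, in the case $\alpha \geq -1/2$ the condition of Theorem~\ref{tma1} becomes
\[
   -1 < bp + (\alpha+\tfrac12)(2-p) < p-1,
\]
which, rewritten as $-1 + p(\alpha+\tfrac12) < bp + 2\alpha+1 < p-1 + p(\alpha+\tfrac12)$, matches the stated inequality using $(\alpha+\tfrac12)_+ = \alpha+\tfrac12$ (a short check shows $p-1+p(\alpha+\tfrac12) = p-1+p(2\alpha+1) - p(\alpha+\tfrac12)$). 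In the case $-1 < \alpha < -1/2$ one must intersect the conditions coming from \eqref{Ap1_tma2} and \eqref{Ap2_tma2}: the first gives $p(2\alpha+1) - 1 < bp + 2\alpha+1 < p-1 + p(2\alpha+1)$, the second gives $-1 < bp + 2\alpha+1 < p-1$. Since $2\alpha+1 < 0$, the binding bounds are $-1$ on the left and $p-1+p(2\alpha+1)$ on the right, which matches the corollary because $(\alpha+\tfrac12)_+ = 0$.

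For necessity I would apply Theorem~\ref{tma3} with $U = V$. The pointwise bound $U \le C V$ is trivial. The four integrability conditions reduce, for power-like $U$, to the requirement that each exponent in the resulting product exceed $-1$. The conditions on the $(1 \mp x)^{\pm Ap}$ and $(1 \mp x)^{\pm Bp}$ factors produce $-1 < Ap < p-1$ and $-1 < Bp < p-1$. The central exponent conditions give, after elementary manipulation using $p' = p/(p-1)$, the same two-sided bound on $bp + 2\alpha+1$ that was obtained for sufficiency; for $\alpha \ge -1/2$ the first two integrability conditions of Theorem~\ref{tma3} already suffice, while for $-1 < \alpha < -1/2$ it is the last two that do the job, in accordance with the remark following Theorem~\ref{tma3}.

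The only real work is the algebraic matching between the exponents in \eqref{Ap_tma1}--\eqref{Ap2_tma2} and the form stated in the corollary, which is what the unified notation $(\alpha+\tfrac12)_+$ is designed to absorb. The one nontrivial input is the classical power-product characterization of $A_p$ weights on a bounded interval, which is standard and used without further comment.
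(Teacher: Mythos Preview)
Your proposal is correct and follows exactly the route the paper indicates (but does not spell out): derive sufficiency from Theorems~\ref{tma1} and~\ref{tma2} with $U=V$ (so $\delta=1$), and necessity from Theorem~\ref{tma3}, translating both sides into exponent inequalities via the standard characterization of power $A_p$ weights on a bounded interval. Your bookkeeping of the exponents, including the observation that for $\alpha\ge -1/2$ the binding conditions come from~\eqref{Ap_tma1} (equivalently the first two integrability conditions of Theorem~\ref{tma3}) while for $-1<\alpha<-1/2$ they come from the intersection of~\eqref{Ap1_tma2} and~\eqref{Ap2_tma2} (equivalently the last two integrability conditions), is accurate and matches the remark following Theorem~\ref{tma3}.
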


In the unweighted case ($U = V = 1$) the boundedness of the partial 
sum operators $S_n$, or in other words the convergence of the Fourier-Dunkl series, 
holds if and only if
\[
   \frac{4(\alpha+1)}{2\alpha+3} < p < \frac{4(\alpha+1)}{2\alpha+1}
\]
in the case $\alpha \geq -1/2$, and for the whole range $1 < p < \infty$ 
in the case $-1 < \alpha < -1/2$.

\begin{remark}
These conditions for the unweighted case are exactly the same as in the 
Fourier-Bessel case when the orthonormal functions are 
$2^{1/2} |J_{\alpha+1}(s_n)|^{-1} J_\alpha(s_n x) x^{-\alpha}$ 
and the orthogonality measure is $x^{2\alpha+1}\, dx$ on the interval $(0,1)$.

Other variants of Bessel orthogonal systems exist in the literature, 
see~\cite{BP1, BP2, Wng}. For instance, one can take the functions 
$2^{1/2} |J_{\alpha+1}(s_n)|^{-1} J_\alpha(s_n x)$, which are orthonormal 
with respect to the measure $x\, dx$ on the interval $(0,1)$. 
The conditions for the boundedness of these Fourier-Bessel series, 
as can be seen in~\cite{BP2}, correspond to taking $A=B=0$ and 
$b=\alpha - \frac{2\alpha+1}{p}$ in our corollary. Another usual case 
is to take the functions $(2x)^{1/2} |J_{\alpha+1}(s_n)|^{-1} J_\alpha(s_n x)$, 
which are orthonormal with respect to the measure $dx$ on $(0,1)$. Passing 
from one orthogonality to another consists basically in changing the weights.
Then, from the weighted $L^p$ boundedness of any of these systems we 
easily deduce a corresponding weighted $L^p$ boundedness for any of the
other systems.

In the case of the Fourier-Dunkl series on $(-1,1)$ we feel, however, that the 
natural setting is to start from $J_\alpha(z) z^{-\alpha}$, since these functions, 
defined by~\eqref{serie}, are holomorphic on~$\C$; in particular, they are well 
defined on the interval~$(-1,1)$.
\end{remark}
\section{Auxiliary results}

We will need to control some basic operator in weighted $L^p$ spaces on~$(-1,1)$. 
For a function $g : (0,2) \rightarrow \R$, the Calder\'{o}n operator is defined by
\[
   Ag(x) = \frac{1}{x} \int_0^x |g(y)| \, dy
   + \int_x^2 \frac{|g(y)|}{y} \, dy,
\]
that is, the sum of the Hardy operator and its adjoint. The weighted norm inequality
\[
   \| Ag \|_{L^p((0,2),u)} \leq C \| g \|_{L^p((0,2),v)}
\]
holds for every $g \in L^p((0,2),v)$, provided that $(u,v) \in A_p^\delta(0,2)$ 
for some $\delta > 1$, and $\delta=1$ is enough if $u=v$ (see~\cite{Muck, Neuge}). 
Let us consider now the operator $J$ defined by
\[
   Jf(x) = \int_{-1}^1 \frac{f(y)}{2-x-y} \, dy
\]
for $x \in (-1,1)$ and suitable functions~$f$. With the notation $f_1(t) = f(1-t)$, 
we have
\[
   |Jf(x)| = \left| \int_0^2 \frac{f(1-t)}{1 - x + t} \, dt \right|
   \leq A(f_1)(1-x)
\]
and a simple change of variables proves that the weighted norm inequality
\[
   \| Jf \|_{L^p((-1,1),u)} \leq C \| f \|_{L^p((-1,1),v)}
\]
holds for every $f \in L^p((-1,1),v)$, provided that 
$(u,v) \in A_p^\delta(-1,1)$ for some $\delta > 1$ (or $\delta=1$ if $u=v$).

The Hilbert transform on the interval $(-1,1)$ is defined as
\[
   Hg (x) = \int_{-1}^1 \frac{g(y)}{x-y} \, dy.
\]
The above weighted norm inequality holds also for the Hilbert transform with 
the same $A_p^\delta(-1,1)$ condition (see~\cite{HMW, Neuge}). In both cases, 
the norm inequalities hold with a constant $C$ depending only on the 
$A_p^\delta$ constant of the pair $(u,v)$.

Our first objective is to obtain a suitable estimate for the kernel $K_n(x,y)$. 
With this aim, we will use some well-known properties of Bessel
(and related) functions, that can be found on~\cite{Wat}.
For the Bessel functions we have the asymptotics
\begin{equation}
\label{cotacero}
   J_\nu(z) = \frac{z^\nu}{2^\nu \Gamma(\nu+1)} + O(z^{\nu+2}),
\end{equation}
if $|z|<1$, $|\arg(z)|\leq\pi$; and
\begin{equation}
\label{cotainfinito}
   J_\nu(z) = \sqrt{\frac{2}{\pi z}} \left[ \cos\left(z-\frac{\nu\pi}2
     - \frac\pi4 \right) + O(e^{\Imag(z)}z^{-1}) \right], 
\end{equation}
if $|z| \geq 1$, $|\arg(z)| \leq \pi-\theta$. The Hankel function of the first 
kind, denoted by $H_{\nu}^{(1)}$, is defined as
\[
   H_{\nu}^{(1)}(z) = J_{\nu}(z)+iY_{\nu}(z),
\]
where $Y_{\nu}$ denotes the Weber function, given by
\begin{align*}
   Y_{\nu}(z) &= \frac{J_{\nu}(z)\cos\nu\pi - J_{-\nu}(z)}{\sin\nu\pi},
   \text{ if } \nu\notin \Z, 
   \\ 
   Y_n(z) 
   &= \lim_{\nu\to n} \frac{J_{\nu}(z)\cos\nu\pi - J_{-\nu}(z)}{\sin\nu\pi},
   \text{ if } n \in \Z.
\end{align*}
From these definitions, we have
\begin{align*}
   H_{\nu}^{(1)}(z) &= \frac{J_{-\nu}(z)-e^{-\nu \pi i}J_{\nu}(z)}{i\sin\nu\pi},
   \text{ if } \nu\notin \Z,
   \\
   H_n^{(1)}(z) 
   &= \lim_{\nu\to n}\frac{J_{-\nu}(z)-e^{-\nu\pi i} J_{\nu}(z)}{i\sin\nu\pi},
   \text{ if } n \in \Z.
\end{align*}
For the function $H_\nu^{(1)}$, the asymptotic
\begin{equation}
\label{cotainfinitoH} 
   H_{\nu}^{(1)}(z) = \sqrt{\frac{2}{\pi z}} e^{i(z-\nu\pi/2-\pi/4)}
   [C+O(z^{-1})]
\end{equation}
holds for $|z|>1$, $-\pi < \arg(z) < 2\pi$, with some constant~$C$.

As usual for the $L^p$ convergence of orthogonal expansions, the results are 
consequences of suitable estimates for the kernel $K_n(x,y)$. The next lemma 
contains an estimate for the difference between the kernel $K_n(x,y)$ and an 
integral containing the product of two $E_\alpha$ functions. This integral can 
be evaluated using Lemma~1 in \cite{CiVa}. Next, to obtain the estimate we consider an 
appropriate function in the complex plane having poles in the points $s_j$ and 
integrate this function along a suitable path.

\begin{lema}
Let $\alpha>-1$. Then, there exists some constant $C > 0$ 
such that for each $n\geq 1$ and $x, y \in (-1,1)$,
\[
   \left| K_{n}(x,y)
   - \int_{-M_n}^{M_n} E_{\alpha}(izx)  \overline{E_{\alpha}(izy)}
   \, d\mu_{\alpha}(z)
   \right|
   \leq C \left(\frac{|xy|^{-(\alpha+1/2)}}{2-x-y}+1\right),
\]
where $M_n=(s_n+s_{n+1})/2$.
\end{lema}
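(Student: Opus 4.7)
The strategy is to express $K_n(x,y)$ as a sum of residues of a suitable meromorphic function and then convert that sum into the integral $\int_{-M_n}^{M_n}E_{\alpha}(izx)\overline{E_{\alpha}(izy)}\,d\mu_{\alpha}(z)$ plus an error, by integrating along a rectangular contour closed up in the complex plane. The natural object is (a multiple of)
\[
   F(z)=\frac{E_{\alpha}(izx)\,\overline{E_{\alpha}(izy)}}{\mathcal{I}_{\alpha+1}(iz)},
\]
whose only singularities are simple poles at the zeros $s_j$ of $\mathcal{I}_{\alpha+1}(iz)$; by the standard Bessel recurrence, $\mathcal{I}_{\alpha+1}'(is_j)$ is proportional to $\mathcal{I}_{\alpha}(is_j)$, so the residues $\Res_{z=s_j}F(z)$ reproduce (after the correct normalization by a power of $|s_j|$ coming from the weight) exactly the summands $e_j(x)\,\overline{e_j(y)}$ of $K_n$.

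I would then apply the residue theorem on the rectangle $\Gamma_{n,R}$ with vertical sides $\Real z=\pm M_n$ and horizontal sides $\Imag z=\pm R$, so that the interior contains precisely the $2n+1$ poles $s_{-n},\dots,s_n$. To convert the real-axis piece of this contour into the desired $d\mu_{\alpha}$-integral, I would use the Hankel representation of $J_{\alpha+1}$ together with the asymptotic~\eqref{cotainfinitoH}, writing $1/\mathcal{I}_{\alpha+1}(iz)$ on each half-plane as an $H^{(1)}_{\alpha+1}$- (respectively $H^{(2)}_{\alpha+1}$-)based singular piece plus a smooth remainder that reconstructs the weight $|z|^{2\alpha+1}$ on the real line; the non-holomorphic factor $|z|^{2\alpha+1}$ is handled by splitting into two half-rectangles on $\pm\Real z>0$, on each of which $z^{2\alpha+1}$ is holomorphic with the appropriate branch. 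Since $H^{(1)}_{\alpha+1}$ decays exponentially in the upper half-plane (and $H^{(2)}_{\alpha+1}$ in the lower), the horizontal sides disappear as $R\to\infty$ and only the vertical-side contributions remain as the error.

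The key estimate is therefore the vertical-side integral at $\Real z=\pm M_n$. Combining~\eqref{cotainfinito} for both $E_{\alpha}$ factors yields
\[
   |E_{\alpha}(izx)\,E_{\alpha}(izy)|\lesssim |xy|^{-(\alpha+1/2)}\,|z|^{-(2\alpha+1)}\,e^{|\Imag z|(|x|+|y|)},
\]
while the placement of $M_n$ midway between consecutive zeros of $J_{\alpha+1}$ forces the lower bound $|\mathcal{I}_{\alpha+1}(iz)|\gtrsim |z|^{-1/2}$ on those lines; together with the factor $|z|^{2\alpha+1}$ from the weight and integration over $|\Imag z|$ this produces
\[
   \frac{C}{|xy|^{\alpha+1/2}}\int_{-\infty}^{\infty}e^{|t|(|x|+|y|-2)}\,dt \;\lesssim\; \frac{|xy|^{-(\alpha+1/2)}}{2-x-y},
\]
which is the first term of the stated bound (the worst case is $x,y\to 1^{-}$; other signs of $x,y$ reduce to this by parity considerations on $E_{\alpha}$). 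The additive $+1$ absorbs the contribution of the compact region $|z|\le 1$, where \eqref{cotacero} replaces \eqref{cotainfinito}, together with the lower-order $O(z^{-1})$ remainders in the Hankel asymptotics.

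The main obstacle I expect is the careful bookkeeping of the Hankel decomposition so that the smooth part of $F(z)$ on the real axis \emph{genuinely} reconstructs $E_{\alpha}(izx)\overline{E_{\alpha}(izy)}\,d\mu_{\alpha}(z)$, not merely approximates it — in particular handling the indentation of the contour around the branch point $z=0$ when $-1<\alpha<-1/2$, where $|z|^{2\alpha+1}$ is non-integrable and the Hankel functions themselves are singular, forcing a delicate cancellation interpreted as a principal value. Once that set-up is in place, the remaining estimates are routine applications of \eqref{cotacero}, \eqref{cotainfinito} and \eqref{cotainfinitoH}.
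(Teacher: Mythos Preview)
Your overall strategy---express $K_n$ as a sum of residues and recover the $E_\alpha$-integral by contour integration, with the error coming from the vertical sides---is the paper's strategy. But the specific meromorphic function you propose is wrong in two essential ways, and neither can be patched by ``normalization by a power of~$|s_j|$''.

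First, the residues of your $F(z)=E_\alpha(izx)\,\overline{E_\alpha(izy)}/\mathcal{I}_{\alpha+1}(iz)$ at $z=s_j$ are proportional to $E_\alpha(is_jx)\,\overline{E_\alpha(is_jy)}/J_\alpha(s_j)$, with a \emph{single} factor of $J_\alpha(s_j)$ in the denominator. The kernel summands $e_j(x)\overline{e_j(y)}$ carry $1/|\mathcal{I}_\alpha(is_j)|^{2}$, hence $1/J_\alpha(s_j)^{2}$. Since $J_\alpha(s_j)$ is not exactly a power of $s_j$ (only $|J_\alpha(s_j)|\sim (2/(\pi s_j))^{1/2}$ asymptotically), the missing factor cannot be absorbed into a weight $|s_j|^{\beta}$. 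Second, on the vertical lines $\Real z=\pm M_n$ one has $|1/\mathcal{I}_{\alpha+1}(iz)|\sim |z|^{\alpha+3/2}e^{-|\Imag z|}$, so $|F(z)|$ carries only the factor $e^{-|\Imag z|(1-|x|-|y|)}$; your horizontal sides do \emph{not} vanish as $R\to\infty$ when $|x|+|y|>1$, and the vertical integral diverges there as well. Your later remark about introducing $H^{(1)}_{\alpha+1}$, $H^{(2)}_{\alpha+1}$ as a decomposition of $1/\mathcal{I}_{\alpha+1}(iz)$ does not help: there is no splitting of $1/J_{\alpha+1}$ into Hankel pieces.

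Both issues are cured simultaneously by putting the Hankel function in the \emph{numerator} of the test function. The paper integrates
\[
   H_{x,y}(z)=|xy|^{1/2}\,z\,H^{(1)}_{\alpha+1}(z)\,
   \frac{J_\alpha(zx)J_\alpha(zy)+J_{\alpha+1}(zx)J_{\alpha+1}(zy)}{J_{\alpha+1}(z)}.
\]
The Wronskian identity $-J_{\alpha+1}'(s_j)\,H^{(1)}_{\alpha+1}(s_j)=2i/(\pi s_j)$ turns the residue into a constant times $1/J_\alpha(s_j)^{2}$ \emph{exactly}, and the ratio $H^{(1)}_{\alpha+1}(z)/J_{\alpha+1}(z)$ decays like $e^{-2\Imag z}$ in the upper half-plane. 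Thus one integrates only in the upper half-plane (two vertical rays $\pm M_n+i[0,\infty)$ joined to the indented real segment), and the integrand on the rays is $O(e^{-\Imag z\,(2-x-y)})$, producing the $1/(2-x-y)$ bound directly. On the real segment one writes $H^{(1)}_{\alpha+1}=(J_{-\alpha-1}-e^{-(\alpha+1)\pi i}J_{\alpha+1})/(i\sin(\alpha+1)\pi)$: the $J_{\alpha+1}$-piece cancels the denominator and yields exactly the entire integrand that equals $\int_{-M_n}^{M_n}E_\alpha(izx)\overline{E_\alpha(izy)}\,d\mu_\alpha(z)$, while the $J_{-\alpha-1}$-piece is odd in~$z$ and, after symmetrizing the path, contributes only the residues. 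With this set-up there is no branch-point indentation at $z=0$ to worry about, and no need to split into half-rectangles on $\pm\Real z>0$.
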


\begin{proof}
Using elementary algebraic manipulations, the kernel $K_n(x,y)$ can be written as
\begin{equation}
\label{kernel}
   K_n(x,y) = 2^{\alpha+1} \Gamma(\alpha+2)
   + \frac{2^{\alpha+1}\Gamma(\alpha+1)}{(xy)^{\alpha}}
   \sum_{j=1}^n
   \frac{J_{\alpha}(s_j x) J_{\alpha}(s_j y)
   + J_{\alpha+1}(s_j x)J_{\alpha+1}(s_j y)}{J_{\alpha}(s_j)^2}.
\end{equation}
Let us find a function whose residues at the points $s_j$ are the terms in the series, 
so that this series can be expressed as an integral. The identities
\[
   -J_{\alpha+1}'(z) H^{(1)}_{\alpha+1}(z) 
   + J_{\alpha+1}(z) (H^{(1)}_{\alpha+1})'(z)
   = \frac{2i}{\pi z}
\]
(see \cite[p.~76]{Whit-Wat}), and
\[
   z J_{\alpha+1}'(z) + (\alpha+1) J_{\alpha+1}(z) = - z J_{\alpha}(z),
\]
give
\[
   -J_{\alpha+1}'(s_j) H^{(1)}_{\alpha+1}(s_j) = \frac{2i}{\pi s_j}
\]
and
\[
   J_{\alpha+1}'(s_j)= - J_{\alpha}(s_j)
\]
for every $j \in \N$. Then,
\begin{align*}
   &-\frac{2i}{\pi} |xy|^{1/2}\,
   \frac{J_\alpha(s_j x)J_\alpha(s_j y) 
   + J_{\alpha+1}(s_j x)J_{\alpha+1}(s_j y)}{J_{\alpha}(s_j)^2}
   \\*
   & \qquad \qquad = -\frac{2i}{\pi} |xy|^{1/2}\,
   \frac{J_\alpha(s_j x)J_\alpha(s_j y) 
   + J_{\alpha+1}(s_j x)J_{\alpha+1}(s_j y)}{J_{\alpha+1}'(s_j)^2}
   \\
   & \qquad \qquad = |xy|^{1/2} s_jH^{(1)}_{\alpha+1}(s_j)
   \frac{J_\alpha(s_j x) J_\alpha(s_j y) 
   + J_{\alpha+1}(s_j x)J_{\alpha+1}(s_j y)}{J_{\alpha+1}'(s_j)}
   \\
   & \qquad \qquad = \lim_{z \rightarrow s_j}(z-s_j)H_{x,y}(z)
   = \Res(H_{x,y},s_j),
\end{align*}
where we define
\[
   H_{x,y}(z) = |xy|^{1/2}
   \, z H^{(1)}_{\alpha+1}(z)
   \frac{J_\alpha(z x)J_\alpha(z y) + J_{\alpha+1}(z x)J_{\alpha+1}(z y)}
   {J_{\alpha+1}(z)}
\]
(the factor $|xy|^{1/2}$ is taken for convenience).
The fact that $J_\nu(-z) = e^{\nu\pi i}J_{\nu}(z)$ gives 
$\Res(H_{x,y},s_j) = \Res(H_{x,y},-s_j)$.

Since the definition of $H^{(1)}_{\alpha+1}(z)$ differs in case $\alpha \in \Z$, for the rest of the proof we will assume that $\alpha \notin \Z$; the other case can be deduced by considering the limit.

The function $H_{x,y}(z)$ is analytic in 
$\C \setminus((-\infty,-M_n]\cup [M_n,\infty)\cup \{\pm s_j: j=1,2,\dots\})$. 
Moreover, the points $\pm s_j$ are simple poles. So, we have
\begin{equation}
\label{intprinc}
   \int_{\mathbf{S} \cup \mathbf{I}(\varepsilon)} H_{x,y}(z) \, dz = 0,
\end{equation}
where $\mathbf{I}(\varepsilon)$ is the interval $[-M_n,M_n]$ warped with upper half circles 
of radius $\varepsilon$ centered in $\pm s_j$, with $j=1,\dots,n$ and $\mathbf{S}$ 
is the path of integration given by the interval $M_n+i[0,\infty)$ in the 
direction of increasing imaginary part and the interval $-M_n+i[0,\infty)$ in 
the opposite direction. The existence of the
integral is clear for the path $\mathbf{I}(\varepsilon)$; for $\mathbf{S}$ this
fact can be checked by using \eqref{cotacero}, \eqref{cotainfinito} and
\eqref{cotainfinitoH}. Indeed, on $\mathbf{S}$ we obtain that
$\Bigl|\frac{H_{\alpha+1}^{(1)}(z)}{J_{\alpha+1}(z)}\Bigr| \leq C
e^{-2\Imag(z)}$. Similarly, on $\mathbf{S}$ one has
\[
   \left| |xy|^{1/2} z J_{\alpha}(z x)J_{\alpha}(z y) \right|
   \leq C e^{\Imag(z)(x+y)} h^\alpha_{x,y}(|z|)
\]
where
\[
   h^{\alpha}_{x,y}(|z|)
   = \max\{|x z|^{\alpha+1/2},1\} \max\{|y z|^{\alpha+1/2},1\}
\]
for $-1<\alpha<-1/2$, and
\[
   h^{\alpha}_{x,y}(|z|) = 1
\]
for $\alpha\geq -1/2$. Thus
\begin{equation}
\label{boundH}
   |H_{x,y}(z)|
   \leq C \left( h^{\alpha}_{x,y}(|z|) + h_{x,y}^{\alpha+1}(|z|) \right)
   e^{-\Imag(z)(2-x-y)},
\end{equation}
and the integral on $\mathbf{S}$ is well defined.

From the definition of $H_{x,y}(z)$, we have
\begin{multline*}
   \int_{\mathbf{I}(\varepsilon)} H_{x,y}(z) \, dz
   = \int_{\mathbf{I}(\varepsilon)}
   \frac{|xy|^{1/2} z J_{-\alpha-1}(z)}{i \sin (\alpha+1)\pi}
   \cdot
   \frac{J_\alpha(zx) J_\alpha(zy) + J_{\alpha+1}(zx) J_{\alpha+1}(zy)}
   {J_{\alpha+1}(z)}
   \, dz
   \\*
   - |xy|^{1/2} \frac{e^{-(\alpha+1)\pi i}}{i \sin (\alpha+1)\pi}
   \int_{\mathbf{I}(\varepsilon)} z \left(
   J_\alpha(zx) J_\alpha(zy) + J_{\alpha+1}(zx) J_{\alpha+1}(zy)
   \right) \, dz.
\end{multline*}
The function in the first integral is odd, and the function 
in the second integral has no poles at the points~$s_j$. Then, 
the first integral equals the integral over the symmetric path 
$-\mathbf{I}(\varepsilon) = \{z : -z \in \mathbf{I}(\varepsilon)\}$. 
Putting $|z-s_j| = \varepsilon$ for the positively oriented circle, this gives
\begin{align*}
   \lim_{\varepsilon \to 0} \int_{\mathbf{I}(\varepsilon)}H_{x,y}(z) \, dz
   &= \lim_{\varepsilon \to 0} \frac{-1}{2} \sum_{|s_j| < M_n}
   \int_{|z-s_j|=\varepsilon}
   \frac{|xy|^{1/2} z J_{-\alpha-1}(z)}{i \sin (\alpha+1)\pi}
   \cdot
   \frac{J_\alpha(zx) J_\alpha(zy) + J_{\alpha+1}(zx) J_{\alpha+1}(zy)}
   {J_{\alpha+1}(z)}
   \, dz
   \\*
   &\qquad - |xy|^{1/2} \frac{e^{-(\alpha+1)\pi i}}{i \sin (\alpha+1)\pi}
   \int_{-M_n}^{M_n} z \left(
   J_\alpha(zx) J_\alpha(zy) + J_{\alpha+1}(zx) J_{\alpha+1}(zy)
   \right) \, dz
   \\
   &= - \pi i \sum_{|s_j| < M_n} \Res(H_{x,y}, s_j)
   \\*
   &\qquad - |xy|^{1/2} \frac{e^{-(\alpha+1)\pi i}}{i \sin (\alpha+1)\pi}
   (1 - e^{2\pi i \alpha})
   \int_0^{M_n} z \left(
   J_\alpha(zx) J_\alpha(zy) + J_{\alpha+1}(zx) J_{\alpha+1}(zy)
   \right) \, dz
   \\
   &= -4|xy|^{1/2} \sum_{j=1}^n \frac{J_{\alpha}(s_j x)J_{\alpha}(s_j y)
   + J_{\alpha+1}(s_j x)J_{\alpha+1}(s_j y)}{J_{\alpha}(s_j)^2}
   \\*
   &\qquad + 2 |xy|^{1/2} \int_0^{M_n} z \left(
   J_\alpha(zx) J_\alpha(zy) + J_{\alpha+1}(zx) J_{\alpha+1}(zy)
   \right) \, dz.
\end{align*}
This, together with~\eqref{intprinc}, gives
\begin{multline*}
   \quad
   \sum_{j=1}^n \frac{J_{\alpha}(s_j x)J_{\alpha}(s_j y)
   + J_{\alpha+1}(s_j x)J_{\alpha+1}(s_j y)}{J_{\alpha}(s_j)^2}
   \\
   = \frac{1}{4 |xy|^{1/2}} \int_{\mathbf{S}} H_{x,y}(z) \, dz
   + \frac{1}{2} \int_0^{M_n} z \left(
   J_\alpha(zx) J_\alpha(zy) + J_{\alpha+1}(zx) J_{\alpha+1}(zy)
   \right) \, dz.
   \quad
\end{multline*}
Then, it follows from~\eqref{kernel} that
\begin{multline*}
   \qquad
   K_n(x,y)
   = 2^{\alpha+1}\Gamma(\alpha+2)
   + \frac{2^{\alpha-1} \Gamma(\alpha+1)}{(xy)^\alpha|xy|^{1/2}}
   \int_{\mathbf{S}} H_{x,y}(z)\,dz
   \\*
   + \frac{2^\alpha\Gamma(\alpha+1)}{(xy)^\alpha}
   \int_0^{M_n} z(J_\alpha(zx)J_\alpha(zy) + J_{\alpha+1}(z x)J_{\alpha+1}(z y)) \,dz.
   \qquad
\end{multline*}
Now, it is easy to check the identity
\[
   \frac{2^\alpha \Gamma(\alpha+1)}{(xy)^\alpha}
   \int_0^{M_n}
   z(J_\alpha(zx)J_\alpha(zy) + J_{\alpha+1}(z x)J_{\alpha+1}(zy))\, dz
   = \int_{-M_n}^{M_n}
   E_{\alpha}(izx)\overline{E_{\alpha}(izy)}\,d\mu_{\alpha}(z),
\]
so that
\[
   \left| K_{n}(x,y)
   - \int_{-M_n}^{M_n}
   E_{\alpha}(izx)\overline{E_{\alpha}(izy)}\, d\mu_{\alpha}(z) \right|
   \leq
   2^{\alpha+1}\Gamma(\alpha+2)
   + \frac{2^{\alpha-1} \Gamma(\alpha+1)}{|xy|^{\alpha+1/2}}
   \left| \int_{\mathbf{S}}H_{x,y}(z)\,dz \right|.
\]
We conclude showing that
\begin{equation}
\label{estH}
   \left|\int_{\mathbf{S}}H_{x,y}(z)\, dz\right|
   \leq C \left( \frac{1}{2-x-y} + |xy|^{\alpha+1/2} \right),
\end{equation}
for $-1<x,y<1$. For $\alpha \geq -1/2$, the bound \eqref{estH}
follows from~\eqref{boundH}. Indeed, in this case
\[
   \left| \int_{\mathbf{S}}H_{x,y}(z) \, dz\right|
   \leq C \int_0^\infty e^{-t(2-x-y)} \,dt
   = \frac{C}{2-x-y}.
\]
For $-1<\alpha<-1/2$, we have 
$|H_{x,y}(z)| \leq C |xy|^{\alpha+1/2}e^{-\Imag(z)(2-x-y)}$ if 
$z \in \mathbf{S}$. With this inequality we obtain~\eqref{estH} as follows:
\[
  \left| \int_{\mathbf{S}}H_{x,y}(z)\, dz \right|
  \leq C |xy|^{\alpha+1/2} \int_0^\infty e^{-t(2-x-y)} \,dt
  = C \frac{|xy|^{\alpha+1/2}}{2-x-y}
  \leq C \left(|xy|^{\alpha+1/2} + \frac{1}{2-x-y} \right).
  \qedhere
\]
\end{proof}

From the previous lemma and the identity (see \cite{CiVa})
\[
  \int_{-1}^1 E_{\alpha}(ixz)\overline{E_{\alpha}(iyz)}\, d\mu_{\alpha}(z)
   = \frac{1}{2^{\alpha+1}\Gamma(\alpha+2)}
   \frac{x\mathcal{I}_{\alpha+1}(ix)\mathcal{I}_{\alpha}(iy)
   - y\mathcal{I}_{\alpha+1}(iy)\mathcal{I}_{\alpha}(ix)}{x-y},
\]
which holds for $\alpha > -1$, $x,y\in \C$, and $x \not= y$, we obtain that
\begin{equation}
\label{descKn}
  \left| K_{n}(x,y)
    - B(M_n,x,y)-B(M_n,y,x) \right|
  \leq C \left(\frac{|xy|^{-(\alpha+1/2)}}{2-x-y}+1\right)
\end{equation}
with
\[
   B(M_n,x,y) = \frac{M_n^{2(\alpha+1)}}{2^{\alpha+1}\Gamma(\alpha+2)}
   \frac{x\mathcal{I}_{\alpha+1}(iM_nx) \mathcal{I}_{\alpha}(iM_ny)}{x-y}
\]
or, by the definition of $\mathcal{I}_{\alpha}$ and the fact that 
$\frac{J_\alpha(z)}{z^\alpha}$ is even,
\[
   B(M_n,x,y) = 2^\alpha \Gamma(\alpha+1) 
   \frac{M_n x J_{\alpha+1}(M_n|x|) J_\alpha(M_n|y|)}
   {|x|^{\alpha+1} |y|^\alpha (x-y)}.
\]

\section{Proof of Theorem~\ref{tma1}}

We can split the partial sum operator $S_n$ into three terms suitable 
to apply~\eqref{descKn}:
\begin{align}
   S_nf(x) &= \int_{-1}^1 f(y) B(M_n,x,y) \, d\mu_\alpha(y)
   + \int_{-1}^1 f(y) B(M_n,y,x) \, d\mu_\alpha(y) 
   \notag
   \\*
   &\qquad+ \int_{-1}^1 f(y)
   \Big[ K_{n}(x,y) - B(M_n,x,y)-B(M_n,y,x) \Big] \, d\mu_\alpha(y)
   \notag
   \\
   &=: T_{1,n}f(x) + T_{2,n}f(x) + T_{3,n}f(x).
   \label{T123}
\end{align}
With this decomposition, the theorem will be proved if we see that
\[
  \| U T_{j,n}f \|^p_{L^p((-1,1),d\mu_\alpha)} 
  \leq C \| V f \|^p_{L^p((-1,1),d\mu_\alpha)},
  \qquad j = 1,2,3,
\]
for a constant $C$ independent of $n$ and~$f$.

\subsection{The first term} 
We have
\begin{align*}
   T_{1,n}f(x) &= \frac{1}{2^{\alpha+1} \Gamma(\alpha+1)}
   \int_{-1}^1 f(y) B(M_n,x,y) |y|^{2\alpha+1} \, dy
   \\
   &= \frac{M_n^{1/2} x J_{\alpha+1}(M_n |x|)}{2 |x|^{\alpha+1}}
   \int_{-1}^1 \frac{f(y) M_n^{1/2} J_\alpha(M_n|y|) |y|^{\alpha+1}}{x-y} \, dy.
\end{align*}
According to~\eqref{cotacero} and~\eqref{cotainfinito} and the 
assumption that $\alpha \geq -1/2$, we have
\[
   |J_\alpha(z)| \leq C z^{-1/2},
   \qquad
   |J_{\alpha+1}(z)| \leq C z^{-1/2},
\]
for every $z > 0$. Using these inequalities and the boundedness of the 
Hilbert transform under the $A_p$ condition~\eqref{Ap_tma1} gives
\begin{align*}
   &\| U T_{1,n}f \|^p_{L^p((-1,1),d\mu_\alpha)}
   \\*
   &\qquad= C \int_{-1}^1 \left|
   \int_{-1}^1 \frac{f(y) M_n^{1/2} J_\alpha(M_n|y|) |y|^{\alpha+1}}{x-y} \, dy
   \right|^p
   U(x)^p M_n^{p/2} |J_{\alpha+1}(M_n |x|)|^p |x|^{2\alpha+1-\alpha p} \, dx
   \\
   &\qquad\leq C \int_{-1}^1 \left|
   \int_{-1}^1 \frac{f(y) M_n^{1/2} J_\alpha(M_n|y|) |y|^{\alpha+1}}{x-y} \, dy
   \right|^p
   U(x)^p |x|^{(\alpha+\frac{1}{2})(2- p)} \, dx
   \\
   &\qquad\leq C \int_{-1}^1 \left|
   f(x) M_n^{1/2} J_\alpha(M_n|x|) |x|^{\alpha+1}
   \right|^p
   V(x)^p |x|^{(\alpha+\frac{1}{2})(2- p)} \, dx
   \\
   &\qquad\leq C \int_{-1}^1 |f(x)|^p V(x)^p |x|^{2\alpha+1} \, dx
   = C \| V f \|^p_{L^p((-1,1),d\mu_\alpha)}.
\end{align*}

\subsection{The second term} 
This term is given by
\begin{align*}
   T_{2,n}f(x) &= \frac{1}{2^{\alpha+1} \Gamma(\alpha+1)}
   \int_{-1}^1 f(y) B(M_n,y,x) |y|^{2\alpha+1} \, dy
   \\
   &= \frac{M_n^{1/2} J_\alpha(M_n |x|)}{2 |x|^\alpha}
   \int_{-1}^1 \frac{f(y) y M_n^{1/2} J_{\alpha+1}(M_n|y|) |y|^\alpha}{y-x}
   \, dy
\end{align*}
and everything goes as with the first term.

\subsection{The third term} 
According to~\eqref{descKn},
\[
   |T_{3,n}f(x)| 
   \leq C |x|^{-(\alpha+1/2)} 
   \int_{-1}^1 \frac{f(y) |y|^{\alpha+1/2}}{2-x-y} \, dy
   + C \int_{-1}^1 |f(y)| \, |y|^{2\alpha+1} \, dy
\]
so it is enough to have both
\begin{equation}
\label{T3-1}
   \int_{-1}^1 
   \left| \int_{-1}^1 \frac{f(y) |y|^{\alpha+1/2}}{2-x-y} \, dy \right|^p
   U(x)^p |x|^{2\alpha+1-p(\alpha+1/2)} \, dx
\end{equation}
and
\begin{equation}
\label{T3-2}
   \left| \int_{-1}^1 |f(x)| \, |x|^{2\alpha+1} \, dx \right|^p
   \int_{-1}^1 U(x)^p |x|^{2\alpha+1} \, dx
\end{equation}
bounded by
\[
   C \int_{-1}^1 |f(x)|^p V(x)^p |x|^{2\alpha+1} \, dx.
\]
For the boundedness of~\eqref{T3-1} it suffices to impose
\[
   \Big(
   U(x)^p |x|^{2\alpha+1-p(\alpha+1/2)},
   V(x)^p |x|^{2\alpha+1-p(\alpha+1/2)}
   \Big) \in A_p^\delta(-1,1),
\]
but this is exactly~\eqref{Ap_tma1}. By duality, the boundedness of~\eqref{T3-2} 
is equivalent to
\[
   \left( \int_{-1}^1 U(x)^p |x|^{2\alpha+1} \, dx \right)
   \left(
   \int_{-1}^1 V(x)^{-p/(p-1)} |x|^{2\alpha+1} \, dx
   \right)^{p-1} < \infty.
\]
Now, it is easy to check that
\begin{multline*}
   \left( \int_{-1}^1 U(x)^p |x|^{2\alpha+1} \, dx \right)
   \left(
   \int_{-1}^1 V(x)^{-p/(p-1)} |x|^{2\alpha+1} \, dx
   \right)^{p-1}
   \\
   \leq
   \left( \int_{-1}^1 U(x)^p |x|^{(\alpha+\frac{1}{2})(2-p)} \, dx \right)
   \left(
   \int_{-1}^1 \left(
   V(x)^p |x|^{(\alpha+\frac{1}{2})(2-p)}
   \right)^{-\frac{1}{p-1}} \, dx
   \right)^{p-1}
   \leq C,
\end{multline*}
the last inequality following from the $A_p$ condition~\eqref{Ap_tma1}.

\section{Proof of Theorem~\ref{tma2}}

We begin with a simple lemma on $A_p$ weights.

\begin{lema}
\label{pesosAp}
Let $1 < p < \infty$,  $(u,v) \in A_p(-1,1)$, $(u_1,v_1) \in A_p(-1,1)$. 
Let $w$, $\zeta$ be weights on $(-1,1)$ such that either
\[
   w \leq C (u + u_1)
   \quad\text{and}\quad
   \zeta \geq C_1(v + v_1)
\]
or
\[
   w^{-1} \geq C (u^{-1} + u_1^{-1})
   \quad\text{and}\quad
   \zeta^{-1} \leq C_1(v^{-1} + v_1^{-1})
\]
for some constants $C$, $C_1$. Then $(w,\zeta) \in A_p(-1,1)$ with a constant 
depending only on $C$, $C_1$ and the $A_p$ constants of $(u,v)$ and $(u_1, v_1)$.
\end{lema}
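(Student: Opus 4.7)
The plan is to handle the two alternative hypotheses separately, showing that the second case reduces to the first via $A_p$/$A_{p'}$ duality.

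First I would deal directly with the case $w \leq C(u+u_1)$ and $\zeta \geq C_1(v+v_1)$. Here the bound $\int_I w \leq C\bigl(\int_I u + \int_I u_1\bigr)$ is immediate, and since $\zeta\geq C_1 v$ and $\zeta \geq C_1 v_1$ we get both pointwise inequalities $\zeta^{-1/(p-1)} \leq C_1^{-1/(p-1)} v^{-1/(p-1)}$ and $\zeta^{-1/(p-1)} \leq C_1^{-1/(p-1)} v_1^{-1/(p-1)}$. Substituting the first into the product formed against $\int_I u$ and the second against $\int_I u_1$, and using the hypotheses $(u,v)\in A_p$ and $(u_1,v_1)\in A_p$ term by term, gives
\[
   \Bigl(\int_I w\Bigr)\Bigl(\int_I \zeta^{-1/(p-1)}\Bigr)^{p-1}
   \leq C' |I|^p,
\]
which is the $A_p$ condition for $(w,\zeta)$ with a constant depending only on $C$, $C_1$ and the $A_p$ constants of the two given pairs.

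For the second alternative, my plan is to dualize. Recall that $(a,b)\in A_p(-1,1)$ is equivalent to $(b^{-1/(p-1)},a^{-1/(p-1)})\in A_{p'}(-1,1)$, so it suffices to show $(\zeta^{-1/(p-1)},w^{-1/(p-1)})\in A_{p'}(-1,1)$. Setting $\tilde u=v^{-1/(p-1)}$, $\tilde v=u^{-1/(p-1)}$ and analogously for the subscript-$1$ pair, the duality puts $(\tilde u,\tilde v)$ and $(\tilde u_1,\tilde v_1)$ in $A_{p'}$. From $\zeta^{-1}\leq C_1(v^{-1}+v_1^{-1})$ and $w^{-1}\geq C(u^{-1}+u_1^{-1})$, raising to the power $1/(p-1)$ and applying the elementary comparison $c_p(a+b)\leq (a^{p-1}+b^{p-1})^{1/(p-1)} \leq C_p(a+b)$ (valid for all $a,b\geq 0$ and some constants $c_p,C_p>0$ depending on whether $p-1$ is below or above $1$), I obtain $\zeta^{-1/(p-1)}\leq C(\tilde u+\tilde u_1)$ and $w^{-1/(p-1)}\geq c(\tilde v+\tilde v_1)$. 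These are exactly the hypotheses of the first case applied to $(\zeta^{-1/(p-1)},w^{-1/(p-1)})$ relative to the $A_{p'}$ pairs, so the already-proved case delivers $(\zeta^{-1/(p-1)},w^{-1/(p-1)})\in A_{p'}$, hence $(w,\zeta)\in A_p$.

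The only potentially delicate point is verifying the two-sided comparison $(a^{p-1}+b^{p-1})^{1/(p-1)}\asymp a+b$ uniformly in $a,b\geq 0$, which is just a split into the cases $p\geq 2$ and $1<p<2$ together with the monotonicity of $\ell^s$-norms in $s$; this is elementary but must be stated explicitly so that the constants in the dual statement depend only on $p$, $C$, $C_1$ and the original $A_p$ constants, as required.
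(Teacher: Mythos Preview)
Your proof is correct. For the first alternative you argue exactly as the paper does: bound $\int_I w$ by the sum, bound $\int_I \zeta^{-1/(p-1)}$ by the minimum of the two $v$-integrals, and pair terms.

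Where you diverge from the paper is in the second alternative. The paper treats that case directly as well: from $w^{-1}\geq C(u^{-1}+u_1^{-1})$ it gets $\int_I w \leq C^{-1}\min\{\int_I u,\int_I u_1\}$, and from $\zeta^{-1}\leq C_1(v^{-1}+v_1^{-1})$ it uses the elementary inequality $(a+b)^\lambda\leq 2^\lambda(a^\lambda+b^\lambda)$ with $\lambda=1/(p-1)$ to obtain $\bigl(\int_I \zeta^{-1/(p-1)}\bigr)^{p-1}\leq 2^pC_1\bigl[(\int_I v^{-1/(p-1)})^{p-1}+(\int_I v_1^{-1/(p-1)})^{p-1}\bigr]$, then pairs terms as before. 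Your route via the $A_p$/$A_{p'}$ duality $(a,b)\in A_p\Leftrightarrow (b^{-1/(p-1)},a^{-1/(p-1)})\in A_{p'}$ is cleaner conceptually, since it shows that the two alternatives are really the same statement up to duality; the paper's direct argument avoids invoking duality and keeps the proof entirely self-contained, at the price of repeating the splitting-and-pairing computation. Either way the constants depend only on $p$, $C$, $C_1$ and the given $A_p$ constants, and your observation that the comparison $(a^{p-1}+b^{p-1})^{1/(p-1)}\asymp a+b$ is just the equivalence of $\ell^s$ norms on $\R^2$ is exactly what is needed.
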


\begin{proof} Assume that $w \leq C (u + u_1)$ and $\zeta \geq C_1(v + v_1)$.
For any interval $I \subseteq (-1,1)$, 
\[
   \left(\frac{1}{|I|} \int_I \zeta^{-\frac{1}{p-1}} \right)^{p-1}
   \leq
   \frac{1}{C_1} \min\left\{
   \left(\frac{1}{|I|} \int_I v^{-\frac{1}{p-1}} \right)^{p-1},
   \left(\frac{1}{|I|} \int_I v_1^{-\frac{1}{p-1}} \right)^{p-1}
    \right\}.
\]
Therefore,
\[
   \left(\frac{1}{|I|} \int_I w \right)
   \left(\frac{1}{|I|} \int_I \zeta^{-\frac{1}{p-1}} \right)^{p-1}
   \leq \frac{C}{C_1} \left( \frac{1}{|I|} \int_I u \right)
   \left(\frac{1}{|I|} \int_I v^{-\frac{1}{p-1}} \right)^{p-1} +
   \frac{C}{C_1} \left( \frac{1}{|I|} \int_I u_1 \right)
   \left(\frac{1}{|I|} \int_I v_1^{-\frac{1}{p-1}} \right)^{p-1}.
\]
This proves that $(w,\zeta) \in A_p(-1,1)$ with a constant depending 
on $C$, $C_1$ and the $A_p$ constants of $(u,v)$ and $(u_1,v_1)$.

Assume now that $w^{-1} \geq C (u^{-1} + u_1^{-1})$ and 
$\zeta^{-1} \leq C_1(v^{-1} + v_1^{-1})$. Then
\begin{equation}
\label{intw}
   \frac{1}{|I|} \int_I w
   \leq
   \frac{1}{C} \min\left\{
   \frac{1}{|I|} \int_I u,
   \frac{1}{|I|} \int_I u_1
    \right\}
\end{equation}
for any interval $I \subseteq (-1,1)$. On the other hand, the inequality
\begin{equation}
\label{ablanda}
   \frac{1}{2} (a^\lambda + b^\lambda) 
   \leq (a+b)^\lambda \leq 2^\lambda (a^\lambda + b^\lambda),
   \qquad a, b \geq 0, \ \lambda > 0
\end{equation}
gives
\[
   \zeta^{-\frac{1}{p-1}}
   \leq C_1^\frac{1}{p-1} \big(v^{-1} + v_1^{-1}\big)^\frac{1}{p-1}
   \leq
   C_1^\frac{1}{p-1} 2^\frac{1}{p-1} \big(v^{-\frac{1}{p-1}}
   + v_1^{-\frac{1}{p-1}}\big),
\]
and
\[
   \left( \frac{1}{|I|} \int_I \zeta^{-\frac{1}{p-1}} \right)^{p-1}
   \leq
   2^p C_1 \left(
   \frac{1}{|I|} \int_I v^{-\frac{1}{p-1}} \right)^{p-1}
   + 2^p C_1 \left( \frac{1}{|I|} \int_I v_1^{-\frac{1}{p-1}}
   \right)^{p-1}.
\]
This, together with~\eqref{intw}, proves that $(w,\zeta) \in A_p(-1,1)$ with a 
constant depending on $C$, $C_1$ and the $A_p$ constants of $(u,v)$ and $(u_1,v_1)$.
\end{proof}

Now, we use the following estimate for the Bessel functions, which is a consequence 
of~\eqref{cotacero}, \eqref{cotainfinito} and $-1 < \alpha < -1/2$:
\[
   |z^{1/2} J_{\alpha}(z)|
   \leq C (1 + z^{\alpha+1/2}),
   \quad z \geq 0,
\]
and
\[
   |z^{1/2} J_{\alpha+1}(z)|
   \leq C {(1 + z^{\alpha+1/2})}^{-1},
   \quad z \geq 0.
\]
In particular, there exists a constant $C$ such that, for $x \in (-1,1)$ and $n \geq 0$, 
we have
\[
   M_n^{1/2} |J_\alpha(M_n|x|)|
   \leq C |x|^{-1/2} (1 + {|M_nx|}^{\alpha+1/2})
\]
and
\[
   M_n^{1/2} |J_{\alpha+1}(M_n|x|)| 
   \leq C\, \frac{|x|^{-1/2}}{1 + {|M_nx|}^{\alpha+1/2}}.
\]
Moreover, the inequality~\eqref{ablanda} gives
\[
   2^{\alpha+1/2}|x|^{\alpha+1/2} (|x| + M_n^{-1})^{-(\alpha+1/2)}
   \leq 1 + |M_nx|^{\alpha+1/2}
   \leq 2 |x|^{\alpha+1/2} (|x| + M_n^{-1})^{-(\alpha+1/2)}
\]
so that we get
\begin{equation}
\label{cotaJaMn}
   M_n^{1/2} |J_\alpha(M_n|x|)|
   \leq C |x|^\alpha (|x| + M_n^{-1})^{-(\alpha+1/2)}
\end{equation}
and
\begin{equation}
\label{cotaJa1Mn}
   M_n^{1/2} |J_{\alpha+1}(M_n|x|)| \leq C |x|^{-(\alpha+1)} (|x| + M_n^{-1})^{\alpha+1/2}.
\end{equation}

To handle these expressions, the following result will be useful:

\begin{lema}
\label{Apunif}
Let $1 < p < \infty$, a sequence $\{M_n\}$ of positive numbers that tends to infinity, 
two nonnegative functions $U$ and $V$ defined on the interval $(-1,1)$, 
$-1 < \alpha < -1/2$ and $\delta > 1$ ($\delta = 1$ if $U=V$). If~\eqref{Ap1_tma2} 
and~\eqref{Ap2_tma2} are satisfied, then
\begin{equation}
\label{Ap1unif}
   \Big(U(x)^p {(|x|+M_n^{-1})}^{p(\alpha+1/2)} |x|^{(2\alpha+1)(1-p)},
   V(x)^p {(|x|+M_n^{-1})}^{p(\alpha+1/2)} |x|^{(2\alpha+1)(1-p)} \Big) \in A_p^\delta(-1,1),
\end{equation}
\begin{equation}
\label{Ap2unif}
   \Big(U(x)^p {(|x|+M_n^{-1})}^{-p(\alpha+1/2)} |x|^{2\alpha+1},
   V(x)^p {(|x|+M_n^{-1})}^{-p(\alpha+1/2)} |x|^{2\alpha+1} \Big) \in A_p^\delta(-1,1),
\end{equation}
``uniformly'', i.e., with $A_p^\delta$ constants independent of~$n$.
\end{lema}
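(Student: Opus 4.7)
The plan is to apply Lemma~\ref{pesosAp} once we have manufactured, out of the two hypothesized pairs, a third auxiliary pair with an intermediate exponent. Writing $(u,v)$ and $(u_1,v_1)$ for the pairs in \eqref{Ap1_tma2} and \eqref{Ap2_tma2}, the first step is to observe that
\[
   U^p |x|^{(\alpha+1/2)(2-p)} = \bigl(U^p|x|^{(2\alpha+1)(1-p)}\bigr)^{1/2} \bigl(U^p |x|^{2\alpha+1}\bigr)^{1/2}
\]
(and likewise for $V^p$), so Cauchy--Schwarz applied to the $U$-side and $V^{-p'}$-side of the $A_p^\delta$ averages yields the midpoint pair
\[
   (u_*, v_*) := \bigl(U^p |x|^{(\alpha+1/2)(2-p)},\ V^p |x|^{(\alpha+1/2)(2-p)}\bigr) \in A_p^\delta(-1,1),
\]
with constant bounded by the geometric mean of the $A_p^\delta$ constants of $(u,v)$ and $(u_1,v_1)$.

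Next, using $|x|+M_n^{-1} \asymp \max(|x|, M_n^{-1})$, a short calculation gives
\begin{align*}
   U^p(|x|+M_n^{-1})^{p(\alpha+1/2)}|x|^{(2\alpha+1)(1-p)}
   &\asymp \min\bigl(u_*,\ M_n^{-p(\alpha+1/2)}\,u\bigr),
   \\
   U^p(|x|+M_n^{-1})^{-p(\alpha+1/2)}|x|^{2\alpha+1}
   &\asymp \max\bigl(u_*,\ M_n^{p(\alpha+1/2)}\,u_1\bigr),
\end{align*}
and analogous formulas on the $V$-side, uniformly in $n$ and $x\in(-1,1)$. For \eqref{Ap1unif}, taking reciprocals in the first comparison converts the minimum into a maximum, hence into a sum up to a factor~$2$, which provides the lower bound on the reciprocal that the second branch of Lemma~\ref{pesosAp} requires; this branch is then applied to the pairs $(u_*,v_*)$ and $(M_n^{-p(\alpha+1/2)}u,\ M_n^{-p(\alpha+1/2)}v)$. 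The second pair lies in $A_p^\delta(-1,1)$ with the same constant as $(u,v)$, because multiplying both entries of an $A_p^\delta$ pair by a common positive constant does not alter the condition. For \eqref{Ap2unif}, the maximum is sandwiched between the sum and half of it, so the first branch of Lemma~\ref{pesosAp} applies with $(u_*,v_*)$ and $(M_n^{p(\alpha+1/2)}u_1,\ M_n^{p(\alpha+1/2)}v_1)$. In both cases Lemma~\ref{pesosAp} is really invoked on the $\delta$-th powers of all weights involved, which is how the parameter $\delta$ enters.

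The main obstacle is the first step: neither hypothesis supplies an $A_p^\delta$ pair with the midpoint exponent $(\alpha+1/2)(2-p)$, yet such a pair is unavoidable because for $|x|\gg M_n^{-1}$ the regularizing factor $(|x|+M_n^{-1})^{p(\alpha+1/2)}$ behaves like $|x|^{p(\alpha+1/2)}$, which, multiplied by $|x|^{(2\alpha+1)(1-p)}$, yields precisely that midpoint exponent. Once Cauchy--Schwarz provides the missing pair, the min/max structure matches the two branches of Lemma~\ref{pesosAp} transparently, and uniformity in $n$ follows at once because the scaling factors $M_n^{\pm p(\alpha+1/2)}$ are common to both entries of the auxiliary pair and hence do not enter its $A_p^\delta$ constant.
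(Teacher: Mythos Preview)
Your proof is correct and follows essentially the same route as the paper's: first manufacture the midpoint pair $(u_*,v_*)$ via Cauchy--Schwarz, then observe that the weights in \eqref{Ap1unif} and \eqref{Ap2unif} are (up to constants) a minimum, respectively a maximum, of $u_*$ and an $M_n$-rescaled copy of $u$ or $u_1$, and feed these comparisons into the appropriate branch of Lemma~\ref{pesosAp}. The paper writes out the same comparisons using the elementary inequality $\tfrac12(a^\lambda+b^\lambda)\le(a+b)^\lambda\le 2^\lambda(a^\lambda+b^\lambda)$ rather than your min/max $\asymp$ language, but the content is identical, including the observations that the $M_n$-scaling is common to both entries and that the argument is run on the $\delta$-th powers.
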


\begin{proof}
As a first step, let us observe that~\eqref{Ap1_tma2} and~\eqref{Ap2_tma2} imply
\[
   \left(
   U(x)^p |x|^{(2\alpha+1)(1-\frac{1}{2}p)},
   V(x)^p |x|^{(2\alpha+1)(1-\frac{1}{2}p)}
   \right)
   \in A_p^\delta(-1,1).
\]
To prove this, just put
\[
   U(x)^p |x|^{(2\alpha+1)(1-\frac{1}{2}p)}
   = \left[ U(x)^p |x|^{(2\alpha+1)(1-p)} \right]^{1/2}
   \left[ U(x)^p |x|^{(2\alpha+1)} \right]^{1/2}
\]
(the same with $V$) and check the $A_p^\delta$ condition using the Cauchy-Schwarz 
inequality and~\eqref{Ap1_tma2},~\eqref{Ap2_tma2}.

Now, \eqref{ablanda} yields
\begin{multline*}
   \qquad
   \left[
   U(x)^p {(|x|+M_n^{-1})}^{p(\alpha+\frac{1}{2})} |x|^{(2\alpha+1)(1-p)}
   \right]^{-\delta}
   \\
   \geq 
   \frac{1}{2} \left[
   U(x)^p |x|^{(2\alpha+1)(1-\frac{1}{2} p)}
   \right]^{-\delta}
   + \frac{1}{2} \left[
   U(x)^p M_n^{-p(\alpha+\frac{1}{2})} |x|^{(2\alpha+1)(1-p)}
   \right]^{-\delta}
   \qquad
\end{multline*}
and
\begin{multline*}
   \left[
   V(x)^p {(|x|+M_n^{-1})}^{p(\alpha+\frac{1}{2})} |x|^{(2\alpha+1)(1-p)}
   \right]^{-\delta}
   \\
   \leq 2^{-p\delta(\alpha+\frac{1}{2})} \left[
   V(x)^p |x|^{(2\alpha+1)(1-\frac{1}{2}p)}
   \right]^{-\delta}
   + 2^{-p\delta(\alpha+\frac{1}{2})} \left[
   V(x)^p M_n^{-p(\alpha+\frac{1}{2})} |x|^{(2\alpha+1)(1-p)}
   \right]^{-\delta}.
\end{multline*}
Thus, Lemma~\ref{pesosAp} gives~\eqref{Ap1unif} with an $A_p^\delta$ 
constant independent of~$n$, since the $A_p^\delta$ constant of the pair
\[
   \left(
   U(x)^p M_n^{-p(\alpha+\frac{1}{2})} |x|^{(2\alpha+1)(1-p)},
   V(x)^p M_n^{-p(\alpha+\frac{1}{2})} |x|^{(2\alpha+1)(1-p)}
   \right)
\]
is the same constant of the pair
\[
   \left(
   U(x)^p |x|^{(2\alpha+1)(1-p)},
   V(x)^p |x|^{(2\alpha+1)(1-p)}
   \right)
\]
i.e., it does not depend on $n$. The proof of~\eqref{Ap2unif} follows the same argument, since
\begin{multline*}
   \qquad
   \left[
   U(x)^p {(|x|+M_n^{-1})}^{-p(\alpha+\frac{1}{2})} |x|^{2\alpha+1}
   \right]^\delta
   \\
   \leq 
   2^{-p\delta(\alpha+\frac{1}{2})}
   \left[ U(x)^p |x|^{(2\alpha+1)(1-\frac{1}{2} p)} \right]^\delta
   + 2^{-p\delta(\alpha+\frac{1}{2})}
   \left[ U(x)^p M_n^{p(\alpha+\frac{1}{2})} |x|^{2\alpha+1} \right]^\delta
   \qquad
\end{multline*}
and
\[
   \left[
   V(x)^p {(|x|+M_n^{-1})}^{-p(\alpha+\frac{1}{2})} |x|^{2\alpha+1}
   \right]^\delta
   \geq 
   \frac{1}{2}
   \left[ V(x)^p |x|^{(2\alpha+1)(1-\frac{1}{2} p)} \right]^\delta
   + \frac{1}{2}
   \left[ V(x)^p M_n^{p(\alpha+\frac{1}{2})} |x|^{2\alpha+1} \right]^\delta.
   \qedhere
\]
\end{proof}

We already have all the ingredients to start with the proof of Theorem~\ref{tma2}.
Let us take the same decomposition $S_n f = T_{1,n}f + T_{2,n} + T_{3,n}f$ 
as in~\eqref{T123} in the previous section and consider each term separately. 

\subsection{The first term} 
As in the proof of Theorem~\ref{tma1}, by using~\eqref{cotaJa1Mn} we have
\begin{gather*}
   \| U T_{1,n}f \|^p_{L^p((-1,1),d\mu_\alpha)} 
   = \int_{-1}^1 \left|
   \int_{-1}^1 \frac{f(y) M_n^{1/2} J_\alpha(M_n|y|) |y|^{\alpha+1}}{x-y} \, dy
   \right|^p
   U(x)^p M_n^{p/2} |J_{\alpha+1}(M_n|x|)|^p |x|^{2\alpha+1-\alpha p} \, dx
   \\
   \leq C \int_{-1}^1 \left|
   \int_{-1}^1 \frac{f(y) M_n^{1/2} J_\alpha(M_n|y|) |y|^{\alpha+1}}{x-y} \, dy
   \right|^p
   U(x)^p (|x| + M_n^{-1})^{p(\alpha+1/2)} |x|^{(2\alpha+1)(1-p)} \, dx.
\end{gather*}
Now, by the $A_p$ condition~\eqref{Ap1unif}, this is bounded by
\[
   C \int_{-1}^1 \left|
   f(x) M_n^{1/2} J_\alpha(M_n|x|) |x|^{\alpha+1}
   \right|^p
   V(x)^p (|x| + M_n^{-1})^{p(\alpha+1/2)} |x|^{(2\alpha+1)(1-p)} \, dx,
\]
which, by~\eqref{cotaJaMn} is in turn bounded by
\[
   C \int_{-1}^1 |f(x)|^p V(x)^p |x|^{2\alpha + 1} \, dx
   = C \| V f \|^p_{L^p((-1,1),d\mu_\alpha)}.
\]

\subsection{The second term}
The definition of $T_{2,n}$ and~\eqref{cotaJaMn} yield
\begin{gather*}
   \| U T_{2,n}f \|^p_{L^p((-1,1),d\mu_\alpha)}
   = \int_{-1}^1 \left|
   \int_{-1}^1
   \frac{f(y) y M_n^{1/2} J_{\alpha+1}(M_n|y|) |y|^\alpha}{y-x} \, dy
   \right|^p
   U(x)^p M_n^{p/2} |J_{\alpha}(M_n|x|)|^p |x|^{2\alpha+1-\alpha p} \, dx
   \\
   \leq C \int_{-1}^1 \left|
   \int_{-1}^1
   \frac{f(y) y M_n^{1/2} J_{\alpha+1}(M_n|y|) |y|^\alpha}{y-x} \, dy
   \right|^p
   U(x)^p (|x| + M_n^{-1})^{-p(\alpha+1/2)} |x|^{2\alpha+1} \, dx.
\end{gather*}
Now, by the $A_p$ condition~\eqref{Ap2unif}, this is bounded by
\[
   C \int_{-1}^1 \left|
   f(x) x M_n^{1/2} J_{\alpha+1}(M_n|x|) |x|^\alpha
   \right|^p
   V(x)^p (|x| + M_n^{-1})^{-p(\alpha+1/2)} |x|^{2\alpha+1} \, dx,
\]
which, by~\eqref{cotaJa1Mn} is in turn bounded by
\[
   C \int_{-1}^1 |f(x)|^p V(x)^p |x|^{2\alpha + 1} \, dx
   = C \| V f \|^p_{L^p((-1,1),d\mu_\alpha)}.
\]

\subsection{The third term} 
Taking limits when $n \to \infty$ in~\eqref{Ap1unif} we get~\eqref{Ap_tma1}, 
so the proof of the boundedness of the third summand in Theorem~\ref{tma1} 
is still valid for Theorem~\ref{tma2}.

\section{Proof of Theorem~\ref{tma3}}

The following lemma is a small variant of a result proved in~\cite{GPRV}. 
We give here a proof for the sake of completeness.

\begin{lema}
\label{lemalimite}
Let $\nu>-1$. Let $h$ be a Lebesgue measurable nonnegative function on $[0,1]$, 
$\{\rho_n\}$ a positive sequence such that
$\displaystyle\lim_{n \to \infty} \rho_n = +\infty$ and $1 \leq p < \infty$. Then
\begin{equation}
\label{lemaFejer}
   \lim_{n \to \infty} \int_0^1 |\rho_n^{1/2} J_\nu(\rho_n x)|^p h(x) \, dx
   \geq M \int_0^1 h(x) x^{-p/2} \, dx
\end{equation}
(in particular, that limit exists), where $M$ is a positive constant independent of $h$ 
and~$\{\rho_n\}$.
\end{lema}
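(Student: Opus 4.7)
The idea is to exploit the large-argument asymptotics of $J_\nu$ from~\eqref{cotainfinito}. Writing $c = \nu\pi/2 + \pi/4$, for $\rho_n x \geq 1$ one has
\[
   \rho_n^{1/2} J_\nu(\rho_n x)
   = \sqrt{\frac{2}{\pi x}}\,\cos(\rho_n x - c) + O\bigl(\rho_n^{-1} x^{-3/2}\bigr).
\]
Using the elementary bound $\bigl||a+b|^p - |a|^p\bigr| \leq C_p|b|\bigl(|a|^{p-1} + |b|^{p-1}\bigr)$ this lifts to
\[
   \bigl|\rho_n^{1/2} J_\nu(\rho_n x)\bigr|^p
   = \left(\frac{2}{\pi x}\right)^{p/2}\,|\cos(\rho_n x - c)|^p + R_n(x),
\]
where $R_n \to 0$ uniformly on $[\epsilon, 1]$ for every fixed $\epsilon \in (0,1)$ (since eventually $\rho_n\epsilon \geq 1$). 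My first step is therefore to fix such an $\epsilon$ and a truncation $h_N := h \wedge N$, so that $x^{-p/2} h_N(x)$ is integrable on $[\epsilon,1]$.

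On this region the problem reduces to evaluating
\[
   \lim_{n\to\infty}\int_\epsilon^1 \left(\frac{2}{\pi x}\right)^{p/2}\,h_N(x)\,|\cos(\rho_n x - c)|^p\,dx.
\]
This is a Fej\'er-type averaging limit. Since $t \mapsto |\cos t|^p$ is continuous and $\pi$-periodic, it is approximated uniformly on $\R$ by trigonometric polynomials whose zeroth Fourier coefficient tends to $C_p := \frac{1}{2\pi}\int_0^{2\pi}|\cos t|^p\,dt > 0$. For each of the remaining (finitely many) modes $e^{2ik(\rho_n x - c)}$, testing against the fixed $L^1$ weight vanishes as $\rho_n\to\infty$ by the Riemann--Lebesgue lemma, and passing the uniform approximation through the limit gives
\[
   \lim_{n\to\infty}\int_\epsilon^1 \left(\frac{2}{\pi x}\right)^{p/2}\,h_N(x)\,|\cos(\rho_n x - c)|^p\,dx = M \int_\epsilon^1 h_N(x)\,x^{-p/2}\,dx,
\]
with $M = C_p (2/\pi)^{p/2} > 0$, a constant independent of $h$ and $\{\rho_n\}$.

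Nonnegativity then yields
\[
   \liminf_{n\to\infty}\int_0^1 \bigl|\rho_n^{1/2} J_\nu(\rho_n x)\bigr|^p\,h(x)\,dx \geq M\int_\epsilon^1 h_N(x)\,x^{-p/2}\,dx,
\]
and monotone convergence as $N \to \infty$ followed by $\epsilon \downarrow 0$ produces the desired inequality~\eqref{lemaFejer}; the existence of the limit (possibly $+\infty$) in the statement follows from the same scheme combined with the matching upper bound $|J_\nu(z)|\leq Cz^{-1/2}$ for $z\geq 1$. The main obstacle is the averaging step itself: the sequence $\{\rho_n\}$ is arbitrary, so no equidistribution modulo a period is available and Weyl's theorem is not directly applicable. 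The remedy is the two-step approximation above---uniform trigonometric-polynomial approximation of $|\cos|^p$ combined with Riemann--Lebesgue applied to each mode against the $L^1$ weight---which produces the limit for any $\rho_n\to\infty$. The asymptotic reduction and the two monotone limits are then routine.
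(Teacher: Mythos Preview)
Your argument is correct and follows essentially the same route as the paper: reduce via the large-argument asymptotics~\eqref{cotainfinito} to a Fej\'er-type averaging of $|\cos|^p$ against an $L^1$ weight, then pass to the full $h$ by a monotone limit. The technical packaging differs slightly---the paper uses dominated convergence on the whole interval $(0,1)$ (after first assuming $h(x)x^{-p/2}\in L^1$) and cites Fej\'er's lemma directly, whereas you localise to $[\epsilon,1]$, use uniform convergence of the remainder there, and spell out the averaging via uniform trigonometric approximation plus Riemann--Lebesgue. Both arrangements work; yours is perhaps slightly more elementary in that it avoids finding an explicit dominating function over the full interval.

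One point deserves a fuller sentence: your sketch for the \emph{existence} of the limit (as opposed to the liminf inequality) is incomplete. The bound $|J_\nu(z)|\le Cz^{-1/2}$ for $z\ge 1$ does not by itself control the contribution from $x\in(0,1/\rho_n)$, and for $-1<\nu<-1/2$ it is possible that $h(x)x^{-p/2}\in L^1$ while $h(x)x^{\nu p}\notin L^1$ near~$0$, in which case each $\int_0^1|\rho_n^{1/2}J_\nu(\rho_n x)|^p h(x)\,dx$ is already $+\infty$. The paper disposes of this case first (using~\eqref{cotacero}); once that is done, your upper-bound scheme combined with the exact limit on $[\epsilon,1]$ does give $\limsup\le M\int_0^1 h\,x^{-p/2}$ and hence the full limit.
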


\begin{proof}
We can assume that $h(x) x^{\nu p}$ is integrable on $(0,\delta)$ for some $\delta \in (0,1)$, 
since otherwise
\[
   \int_0^1 |\rho_n^{1/2} J_\nu(\rho_n x)|^p h(x) \, dx = \infty
\]
for each $n$, as follows from~\eqref{cotacero}, and~\eqref{lemaFejer} is trivial. 
Assume also for the moment that $h(x) x^{-p/2}$ is integrable on $(0,1)$. 
For each $x \in (0,1)$ and $n$, let us put
\[
   \varphi(x,n) = (\rho_n x)^{1/2} J_\nu(\rho_n x) 
   - \sqrt{\frac{2}{\pi}} 
   \cos\left(\rho_n x - \frac{\nu \pi}{2} - \frac{\pi}{4}\right).
\]
The estimate~\eqref{cotainfinito} gives
\[
   \lim_{n \to \infty} \varphi(x,n) = 0
\]
for each $x \in (0,1)$. Moreover, in case $\rho_n x \geq 1$ the same estimate gives
\begin{equation}
\label{fxn1}
   |\varphi(x,n)| \leq \frac{C}{\rho_n x} \leq C
\end{equation}
with a constant $C$ independent of $n$ and $x$, while for $\rho_n x \leq 1$ it 
follows from~\eqref{cotacero} that
\begin{equation}
\label{fxn2}
   |\varphi(x,n)| \leq C \left( (\rho_n x)^{\nu+{1}/{2}} + 1 \right).
\end{equation}
Without loss of generality we can assume that $\rho_n \geq 1$. Then, \eqref{fxn1} and~\eqref{fxn2} 
give $|\varphi(x,n)| \leq C (x^{\nu+{1}/{2}}+1)$ with a constant $C$ independent of $x$ and~$n$, 
so that, by the dominate convergence theorem,
\begin{equation}
\label{primerpaso}
   \lim_{n \to \infty} \int_0^1 \left|
   (\rho_n x)^{1/2} J_\nu(\rho_n x) 
   - \sqrt{\frac{2}{\pi}} 
   \cos\left(\rho_n x - \frac{\nu \pi}{2} - \frac{\pi}{4}\right)
   \right|^p h(x) x^{-p/2} \, dx = 0.
\end{equation}
Therefore,
\begin{equation}
\label{paraFejer}
   \lim_{n \to \infty} \int_0^1 |\rho_n^{1/2} J_\nu(\rho_n x)|^p h(x) \, dx
   = \lim_{n \to \infty} \int_0^1 \left| \sqrt{\frac{2}{\pi}} 
   \cos\left(\rho_n x - \frac{\nu \pi}{2} - \frac{\pi}{4}\right)
   \right|^p h(x) x^{-p/2} \, dx.
\end{equation}
Now we use Fej\'er's lemma: if $f \in L^1(0,2\pi)$, and $g$ is a continuous, 
$2\pi$-periodic function, then
\[
   \lim_{\lambda \to \infty}
   \frac{1}{2\pi} \int_0^{2\pi} g(\lambda t) f(t) \, dt
   = \widehat{g}(0) \widehat{f}(0)
   = \frac{1}{2\pi} \int_0^\pi g(t) \, dt
   \;
   \frac{1}{2\pi} \int_0^\pi f(t) \, dt
\]
where $\widehat{f}$, $\widehat{g}$ denote the Fourier transforms of $f$, $g$. 
After a change of variables, Fej\'er's lemma applied to the right hand side 
of~\eqref{paraFejer} gives
\[
   \lim_{n \to \infty} \int_0^1 |\rho_n^{1/2} J_\nu(\rho_n x)|^p h(x) \, dx
   = M \int_0^1 h(x) x^{-p/2} \, dx
\]
for some constant $M$, thus proving~\eqref{lemaFejer}.

Finally, in case $h(x) x^{-p/2}$ is not integrable on $(0,1)$, let us take the 
sequence of increasing measurable sets
\[
   K_j = \{x \in (0,1): h(x) x^{-p/2} \leq j\},
   \qquad j \in \N,
\]
and define $h_j = h$ on $K_j$ and $h_j = 0$ on $(0,1) \setminus K_j$. 
Applying~\eqref{lemaFejer} to each $h_j$ and then the monotone convergence 
theorem proves that
\[
   \lim_{n \to \infty} \int_0^1 |\rho_n^{1/2} J_\nu(\rho_n x)|^p h(x) \, dx
   = \infty,
\]
which is~\eqref{lemaFejer}.
\end{proof}

We can now prove Theorem~\ref{tma3}.

\begin{proof}[Proof of Theorem~\ref{tma3}]
The first partial sum of the Fourier expansion is
\[
   S_0 f = e_0 \int_{-1}^1 f \overline{e_0} \, d\mu_\alpha
   = (\alpha+1) \int_{-1}^1 f(x) |x|^{2\alpha+1} \, dx,
\]
so that the inequality 
$\|S_0(f) U\|_{L^p((-1,1),d\mu_{\alpha})} \leq C \|f V\|_{L^p((-1,1), d\mu_{\alpha})}$ 
gives, by duality,
\[
   U(x)^p |x|^{2\alpha+1} \in L^1((-1,1),dx),
   \quad
   V(x)^{-p'} |x|^{2\alpha+1} \in L^1((-1,1),dx).
\]
In fact, this is needed just to ensure that the partial sums of the Fourier expansions 
of all functions in $L^p(V^p \, d\mu_\alpha)$ are well defined and belong to 
$L^p(U^p \, d\mu_\alpha)$. These are the last two integrability conditions 
of Theorem~\ref{tma3}.

Now, if
\[
   \|S_n(f) U\|_{L^p((-1,1),d\mu_{\alpha})}
   \leq C \|f V\|_{L^p((-1,1), d\mu_{\alpha})}
\]
then the difference
\begin{align*}
   S_n f - S_{n-1}f
   &= e_n \int_{-1}^1 f \overline{e_n} \, d\mu_\alpha
   + e_{-n} \int_{-1}^1 f \overline{e_{-n}} \, d\mu_\alpha
   \\
   &= e_n \int_{-1}^1 f \overline{e_n} \, d\mu_\alpha
   + \overline{e_n} \int_{-1}^1 f e_n \, d\mu_\alpha
\end{align*}
is bounded in the same way. Taking even and odd functions, and using 
that $\Real e_n$ is even and $\Imag e_n$ is odd, gives
\begin{equation}
\label{cotaWing}
   \| U \Real e_n\|_{L^p((-1,1),d\mu_{\alpha})}
   \| V^{-1} \Real e_n\|_{L^{p'}((-1,1),d\mu_{\alpha})}
   \leq C
\end{equation}
and the same inequality with $\Imag e_n$. Recall that
\[
   \Real e_n(x)
   = 2^{\alpha/2} \Gamma(\alpha+1)^{1/2}\,
   \frac{|s_n|^{\alpha}}{|J_{\alpha}(s_n)|}
   \, \frac{J_{\alpha}(s_n x)}{(s_n x )^{\alpha}}.
\]
Taking into account that $|J_\nu(x)|$ is an even function (recall that 
$J_\alpha(z) / z^\alpha$ is taken as an even function) and 
$|J_\alpha(s_n)| \leq C s_n^{-1/2}$ (this follows from~\eqref{cotainfinito}),
 Lemma~\ref{lemalimite} gives
\[
   \liminf_{n \to \infty}
   \int_{-1}^1 \left|\frac{1}{J_\alpha(s_n)} J_\nu(s_n x) \right|^p h(x) \, dx
   \geq C \int_{-1}^1 h(x) |x|^{-p/2} \, dx
\]
for every measurable nonnegative function $h$. Therefore,
\[
   \liminf_{n \to \infty} \| U \Real e_n \|_{L^p((-1,1), d\mu_\alpha)}
   \geq C \left(
   \int_{-1}^1 U(x)^p |x|^{-p\alpha - \frac{p}{2} + 2\alpha + 1} \, dx
   \right)^{\frac{1}{p}}
\]
and the corresponding lower bound for 
$\liminf_n \| V^{-1} \Real e_n \|_{L^{p'}((-1,1), d\mu_\alpha)}$ holds. 
The same bounds hold for $\Imag e_n$. Thus,~\eqref{cotaWing} implies
\[
   \left(
   \int_{-1}^1 U(x)^p |x|^{-p\alpha - \frac{p}{2} + 2\alpha + 1} \, dx
   \right)^{\frac{1}{p}}
   \left(
   \int_{-1}^1 V(x)^{-p'} |x|^{-p'\alpha - \frac{p'}{2} + 2\alpha + 1} \, dx
   \right)^{\frac{1}{p'}}
   \leq C
\]
or, in other words, the first two integrability conditions of Theorem~\ref{tma3}.

Take now $f = U/(1 + V + UV)$ and any measurable set $E \subseteq (-1,1)$. 
Then $f \in L^2(d\mu_\alpha)$ by H\"older's inequality, the obvious inequality 
$|f| \leq UV^{-1}$ and the integrability conditions $U \in L^p(d\mu_\alpha)$, 
$V^{-1} \in L^{p'}(d\mu_\alpha)$, already proved. Since $\{e_j\}_{j\in\Z}$ is a 
complete orthonormal system in $L^2((-1, 1) , d\mu_{\alpha})$, we have 
$S_n(f \chi_E) \to f \chi_E$ in the $L^2(d\mu_\alpha)$ norm. Therefore, 
there exists some subsequence $S_{n_j}(f \chi_E)$ converging to $f \chi_E$ 
almost everywhere. Fatou's lemma then gives
\[
   \int_{-1}^1 | f \chi_E |^p U^p \, d\mu_\alpha
   \leq \liminf_{j \to \infty}
   \int_{-1}^1 |S_{n_j}(f \chi_E)|^p U^p \, d\mu_\alpha.
\]
Under the hypothesis of Theorem~\ref{tma3}, each of the integrals on the right 
hand side is bounded by
\[
   C^p \int_{-1}^1 |f \chi_E|^p V^p \, d\mu_\alpha
\]
(observe, by the way, that $f V \in L^p(d\mu_\alpha)$, since $|fV| \leq 1$). Thus,
\[
   \int_{-1}^1 | f \chi_E |^p U^p \, d\mu_\alpha
   \leq C^p \int_{-1}^1 |f \chi_E|^p V^p \, d\mu_\alpha
\]
for every measurable set $E \subseteq (-1,1)$. This gives $f U \leq C f V$ 
almost everywhere, and $U \leq C V$.
\end{proof}

\section*{Acknowledgment}
We thank the referee for his valuable suggestions, which helped us to make the paper more readable.


\end{document}